\def\NAT@def@citea{\def\@citea{\NAT@separator}}
\theoremstyle{plain}
\newtheorem{theorem}{Theorem}[section]
\newtheorem{lemma}[theorem]{Lemma}
\newtheorem{proposition}[theorem]{Proposition}
\newtheorem{clm}[theorem]{Claim}
\theoremstyle{definition}
\newtheorem{definition}[theorem]{Definition}
\theoremstyle{remark}
\newcommand{\ot}{\Omega_T }
\newcommand{\po}{\partial\Omega}
\newcommand{\mdiv}{\textup{div}}
\newcommand{\woto}{W^{1,2}\left(\Omega\right)}
\newcommand{\oa}{\Omega_a}
\newcommand{\oc}{\Omega_c}
\newcommand{\op}{\Omega^{\prime}}
\newcommand{\ifa}{i_{\mbox{\tiny{fara}}}}
\newcommand{\ifae}{i^{(\varepsilon)}_{\mbox{\tiny{fara}}}}
\newcommand{\qe}{Q^{(\varepsilon)}}
\newcommand{\pst}{\Phi_s^{(\tau)}}
\newcommand{\pet}{\Phi_e^{(\tau)}}
\newcommand{\te}{\theta_{\varepsilon}}
\newcommand{\psn}{\Phi_s^n}
\newcommand{\pen}{\Phi_e^n}
\begin{document}

	
	\title{An Existence Theorem for a Model of Temperature Within a Lithium-Ion Battery}
	
	\author{
		\name{B. C. Price\textsuperscript{a}\thanks{B. C. Price, Email: bcp193@msstate.edu} and Xiangsheng Xu\textsuperscript{b}\thanks{X. Xu, Email: xxu@math.msstate.edu}}
		\affil{Department of Mathematics and Statistics, Mississippi State
			University, Mississippi State, MS 39762.}
	}
	
	\maketitle
	
	\begin{abstract}
		In this article we investigate a model for the temperature within a Lithium-Ion battery. The model takes the form of a parabolic PDE for the temperature coupled with two elliptic PDE's for the electric potential within the solid and electrolyte phases. The primary difficulty comes from the coupling term, which is given by the Butler-Volmer equation. It features an exponential nonlinearity of both the electric potentials and the reciprocal of the temperature. Another difficulty arising in the temperature equation are the gradients of the electric potentials squared showing up on the right-hand side. Due to the nonlinearity, meaningful estimates for the temperature are currently not known. In spite of this, our investigation reveals the local existence of continuous temperature for the Lithium-Ion Battery. 
	\end{abstract}
	
	\begin{keywords}
		Temperature Model; Lithium-ion batteries; Existence of continuous solutions. 
	\end{keywords}

	\section{Introduction}
	
	In this article we investigate a system of PDE's which model the temperature in a lithium-Ion Battery (or Li-battery for short).  Li-batteries have become a staple in modern technologies, appearing everywhere from our phones to our cars. Mathematical models of Li-batteries provide a way to manage such batteries in a safe and reliable way \cite{CKCAK}. While there are many studies describing Lithium-Ion batteries, (see for instance \cite{CKCAK, CK, WGL, GW, SW, TND, AMR, DCR, TW}), the majority of results are concerning the P2D model and are one dimensional. The only exceptions that we are aware of are the models studied in \cite{WXZ} and \cite{X.Xuo}, and neither of those considered the effects of temperature. In this article we aim to understand the effects of temperature in a Li-Ion battery in multiple space dimensions. 
	
	We now proceed to briefly describe the model. A lithium-ion cell is dividing into three regions; a porous positive electrode, a separator, and a negative electrode. We will denote the lithium-ion cell itself as $\Omega$, then the positive and negative electrodes we denote by $\Omega_c$ and $\Omega_a$ respectively. Finally, we will use $\Omega_s$ to represent the separator.
	
	\begin{figure}[h]
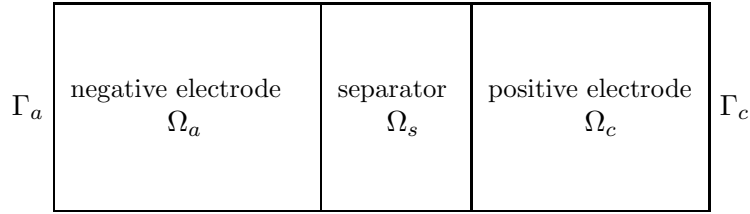

		\centering
			$\Gamma_a$ \begin{tabular}{ | m{8em} | m{4em}| m{7em} | }
				\hline
				& & \\
				& & \\
				{\small negative electrode}&  {\small separator}&{\small positive electrode}\\
				\hspace{.45in}
				$\Omega_a$ &   \hspace{.2in}  $\Omega_s$ &
				\hspace{.45in}
				$\Omega_c$  \\
				& & \\
				& &  \\
				\hline
			\end{tabular} $\Gamma_c$
			\hspace{-.5in} \caption{ The domain $\Omega$}
			\label{fig2}
		\end{figure}
		The negative electrode is typically made of carbon while the positive electrode is made of a metal oxide, such as lithium cobalt oxide or lithium iron phosphate. They are modeled as an agglomeration of spherical solid particles. The entire cell $\Omega$ is filled with an electrolyte solution, which 
		is a lithium salt in an organic solvent. When the battery discharges, the lithium ions move from the negative electrode through the electrolyte to the positive electrode where they combine with electrons from the external circuit to form an ionized metal oxide. During charging, an external electrical current is applied to reverse the process and move the lithium ions back to the negative electrode. 
		The electrode is viewed as a superposition of active material, filler, and electrolyte, and these phases coexist at every point in the model.

		\subsection{Derivation of the Model Equations}
		The state variables required to describe the model are as follows:
		\begin{eqnarray*}
			u(x,t)&=& \mbox{the temperature, }\\
			\Phi_s(x,t)&=&\mbox{the electric potential  in the solid electrode,}\\
			\mathbf{i}_s(x,t)&=&\mbox{ the current density  in the solid electrode},\\
			j_n(x,t)&=&\mbox{the molar flux of lithium at the surface of the spherical particle},\\ 
			\Phi_e(x,t)&=&\mbox{the electric potential  in the  electrolyte,}\\
			\mathbf{i}_e(x,t)&=&\mbox{ the current density  in the  electrolyte},\\
			C_e(x,t)&=&\mbox{ the concentration of lithium ions in the  electrolyte},\\
			C_s(x,r,t)&=&\mbox{ the concentration of lithium ions at a distance $r$ from the center of}\nonumber\\
			&&	\mbox{ a spherical particle located at $x$ in the solid electrode .}
		\end{eqnarray*}
		The input to the model is the external current density $I$ applied to the battery. Kirchoff's law asserts
		$$I=\mathbf{i}_e+\mathbf{i}_s.$$
		On the other hand, at each point in the electrode, the net pore-wall molar flux is related to the divergence of the current via the formula
		\begin{equation}
			\mdiv \mathbf{i}_e=A_sFj_n\equiv A_si_{\mbox{\tiny{fara}}} \ \ \mbox{ in $\op\equiv\Omega_a\cup\Omega_c$, } \nonumber 
		\end{equation}
		where $i_{\mbox{\tiny{fara}}} $ denotes the charge transfer density.
		Next, we look to Ohm's law, which tells us that the current is proportional to the electric field. The electric field can be obtained as the negative gradient of the electric potential. Thus, looking at the solid phase electric potential, Ohm's law tells us, 
		\begin{equation}
			-\sigma_s \nabla \Phi_s = \mathbf{i}_s = I - \mathbf{i}_e \ \ \mbox{ in $\op$. } \nonumber 
		\end{equation}
		We may now take the divergence of this equation to find,
		\begin{equation}\label{s.p.equ}
			-\mdiv\left(\sigma_s\nabla\Phi_s\right) =-A_si_{\mbox{\tiny{fara}}} \ \ \mbox{ in $\op$. } 
		\end{equation}
		
		For the electric potential in the electrolyte we need to use the electrochemical version of Ohm's law, which accounts for the combined effects of concentration and electric potential, \cite{WGL}, and we can write it as,
		\begin{equation}
			-\sigma_e\nabla \Phi_e +\frac{2Ru\sigma_e}{F} \left(1-t^0_+\right) \nabla \ln C_e = \mathbf{i}_e. \nonumber 
		\end{equation}
		Where $K$ is the concentration dependent conductivity, and $\alpha$ is a positive constant. Upon taking the divergence of this equation we obtain,
		\begin{equation}
			-\mdiv\left(\sigma_e\nabla \Phi_e -\frac{2Ru\sigma_e}{F} \left(1-t^0_+\right) \nabla \ln C_e \right) =A_si_{\mbox{\tiny{fara}}}\chi_{\op} \ \ \mbox{in $\Omega$}. \nonumber 
		\end{equation} 
		To prescribe boundary conditions,  we introduce the following notations as in \cite{X.Xuo}
		\begin{align}
			\Gamma_a&=\po\cap\partial\oa  \nonumber \\
			\Gamma_c&=\po\cap\partial\oc. \nonumber 
		\end{align}
		Then, the external boundary of $\op$ is
		$$\partial_{ext}\op\equiv\Gamma_a\cup\Gamma_c.$$
		Then the equations \eqref{s.p.equ} and \eqref{e.p.equ} are coupled  with the boundary conditions, 
		\begin{align}
			\frac{\partial \Phi_e}{\partial \mathbf{n}} &= 0 \ \ \ \mbox{on $\partial\Omega$,} \label{I.el.Pot.Boundary} \\
			\frac{\partial \Phi_s}{\partial \mathbf{n}} &= 0 \ \ \ \mbox{on $\partial\Omega^{\prime}\setminus\left(\Gamma_a\cup\Gamma_c\right)$,} \label{I.s.Pot.int.Bound.} \\
			\sigma\frac{\partial \Phi_s}{\partial \mathbf{n}} &= I \ \ \ \mbox{on $\Gamma_a\cup\Gamma_c$} \label{I.s.Pot.Ext.Bound.}
		\end{align}
		Next, we look at the concentration of lithium-ions in the electrolyte $C_e$. The concentration $C_e$ changes due to the flux, $j$, of lithium-ions into and out of the solid particles. Thus, the conservation of mass can be written as, \cite{JDL}
		\begin{equation}\label{masscon.}
			\varepsilon_e \frac{\partial C_e}{\partial t}+ \mdiv J = \frac{A_s}{F}i_{\mbox{\tiny{fara}}}\chi_{\op}\ \ \mbox{in $\ot$}. 
		\end{equation}
		Where $J$ is given by Fick's Law, $$J = -D_e\nabla C_e+\frac{t^0_+}{F}\mathbf{i}_e  $$ and $D_e$ is the diffusion coefficient.
		Finally, for the concentration of lithium-ions in the solid particles, conservation of mass will once again give us a diffusion equation. We will assume that particles have no interaction with neighboring particles. The only portion of the solid phase concentrations that speak to the rest of the model is the surface concentration of each particle. Diffusion being the dominating phenomenon \cite{CKCAK} means that we may simplify a bit and consider a radially symmetric diffusion equation,
		\begin{equation}
			\partial_t C_s -\frac{1}{r^2} \frac{\partial}{\partial r}\left(r^{2}D_s\frac{\partial C_s}{\partial r} \right) = 0.
		\end{equation}
		The flux of lithium-ions flowing into the cell can then be captured in the boundary conditions,
		\begin{equation}
			\frac{\partial C_s}{\partial r}(x,R(x),t) = -\frac{1}{D_sF}i_{\mbox{\tiny{fara}}}, \ \ \frac{\partial C_s}{\partial r}(x, 0,t)=0.
		\end{equation}
		There is no flow of lithium-ions at the center of each particle which gives us the second boundary condition,
		Conservation of energy asserts that the temperature $T$ satisfies,
		\begin{eqnarray}
			(\rho C_p)\partial_tu-\mdiv\left(k\nabla u\right)&=&Q\ \ \mbox{in $\ot$},\\
			-k\nabla u\cdot\mathbf{n}&=&k_1(u-T_a)\ \ \mbox{on $\Sigma_T$,}\\
			u(x,0)&=&u_0(x)\ \ \mbox{on $\Omega$.}
		\end{eqnarray}
		Where,
		\begin{eqnarray}
			Q&=&\left(A_si_{\mbox{\tiny{fara}}}\eta+\sigma_s|\nabla\Phi_s|^2\right)\chi_{\op}\nonumber\\
			&&+\sigma_e|\nabla\Phi_e|^2+\frac{2RT\sigma_e}{F}(1-t^0_+)\nabla\ln C_e\cdot\nabla\Phi_e.
		\end{eqnarray}
		Then, the charge transfer density $i_{\mbox{\tiny{fara}}}$ is related to the potentials $\Phi_s$, $\Phi_e$ and the temperature $u$ through the Butler-Volmer equation,
		\begin{eqnarray}\label{ifaradefn}
			i_{\mbox{\tiny{fara}}}&=&i_{0,\mbox{\tiny{fara}}}\left[\exp\left(\frac{\alpha F}{Ru}\eta\right)-\exp\left(-\frac{\alpha F}{Ru}\eta\right)\right],\\
			i_{0,\mbox{\tiny{fara}}}&=&Fk_0C_e^{\alpha}\left(C_s^{\tiny{\mbox{Max}}}-C_s^{\tiny{\mbox{Surf}}}\right)^{\alpha}\left(C_s^{\mbox{\tiny{Surf}}}\right)^{\alpha},\\
			\eta&=&\Phi_s-\Phi_e-U
			,\\
		\end{eqnarray}
		Collecting each of these together, and assuming a given function to represent the concentrations we arrive at the following system of PDE's,
		\begin{align}
			(\rho C_p)\partial_tu-\mdiv\left(k\nabla u\right)&=Q\ \ \mbox{in $\ot$}, \label{temp} \\
			-\mdiv\left(\sigma_s\nabla\Phi_s\right) &=-A_si_{\mbox{\tiny{fara}}} \ \ \mbox{ in $\op_T$. }  \label{solidpot} \\
			-\mdiv\left(\sigma_e\nabla \Phi_e \right)&=-\mdiv\left(d_1\sigma_e u\bf{f}\right)  +A_si_{\mbox{\tiny{fara}}}\chi_{\op} \ \ \mbox{in $\ot$}. \label{electrolytepot}
		\end{align}
		Where, 
		\begin{align}
			i_{\mbox{\tiny{fara}}} &= g_1\left[e^{\frac{\alpha(\Phi_s-\Phi_e) }{u}}e^{\frac{-\alpha U}{u}}-e^{-\frac{\alpha (\Phi_s-\Phi_e)}{u}}e^{\frac{\alpha U}{u}}\right], \nonumber \\
			Q&=\left(A_si_{\mbox{\tiny{fara}}}(\Phi_s - \Phi_e)+\sigma_s|\nabla\Phi_s|^2\right)\chi_{\op}\nonumber\\
			&+\sigma_e|\nabla\Phi_e|^2+d_1\sigma_e u \bf{f}\cdot\nabla\Phi_e. \nonumber 
		\end{align}

		\subsection{Main Results}
		Before stating our Main Theorem let us lay out the assumptions on our data, 
		\begin{itemize}
			\item (H1) $\rho$, $C_p$, $T_a$, $\alpha$, $A_s$, $k_1$and $d_1$ are positive constants,
			\item (H2) The functions $k$, $\sigma_s$, and $\sigma_e$ are each bounded, positive functions which are also bounded away from zero,
			\item (H3) The function $U$ is a bounded continuous function on $\op_T$, 
			\item (H4) $g_1$ is a bounded continuous function on $\ot$, and there exists a constant $g_0$ so that, $g_1 \geq g_0 >0$,
			\item (H5) $\mathbf{f}$ is a bounded vector field so that $\mathbf{f}\cdot\mathbf{n} = 0$ on $\partial\Omega$, 
			\item (H6) $I$ is a bounded function and, 
			\begin{equation}
				\int_{\partial\op} I dS = 0, \nonumber 
			\end{equation}
			\item (H7) We assume that, $\left|\Omega_s\right| < \frac{1}{2} \left|\Omega_a\right|$, so that the separator is strictly smaller than the rest of the battery
		\end{itemize}

		We define the spaces, 
		\begin{align}
			&\mathbb{X} = L^{\infty}\left(\op_T\right)\times L^{\infty}\left(\ot\right), \nonumber  \\
			&\mathbb{Y} = L^p\left(0,T;W^{1,p}\left(\op\right)\right)\times L^p\left(0,T;W^{1,p}\left(\Omega\right)\right). \nonumber 
		\end{align}
		For more detailed definitions of the spaces we use throughout this work see the definitions given in the preliminary section.

		Next we give our definition of a weak solution to \eqref{temp}-\eqref{electrolytepot}. 
		\begin{definition}\label{weaksolutiondefn}
			We say that a triple $\left(u,\Phi_s, \Phi_e\right)$ is a weak solution to \eqref{temp}-\eqref{electrolytepot} if the following conditions hold:
			\begin{itemize}
				\item (D1) The function $u$ satisfies, $u>0$, $u\in C\left(\overline{\ot}\right)\cap L^2\left(0,T;\woto\right)$, 
				\item (D2) The functions $\left(\Phi_s,\Phi_e\right)$ satisfy $\left(\Phi_s,\Phi_e\right) \in \mathbb{X}\cap\mathbb{Y}$, and, 
				\begin{equation}
					\int_{\Omega} \Phi_e dx + \int_{\op} \Phi_s dx = 0, \ \ \mbox{ a.e. on $\left(0,T\right)$. } \nonumber 
				\end{equation}
				\item (D3) The functions $\left(u, \Phi_s, \Phi_e \right)$ satisfy the integral equations, 
				\begin{align}
					-\int_{\ot} u \partial_t \varphi_1 dxdt + \int_{\ot} k\nabla u \cdot \nabla \varphi_1 dxdt &= \int_{\ot} Q\left(u,\Phi_s-\Phi_e, \nabla\Phi_s, \nabla\Phi_e\right)\varphi_1 dxdt \nonumber \\
					&+ \int_{\Omega} u_0(x)\varphi_1(x,0) dx \nonumber \\
					\int_{\op_T} \sigma_s \nabla\Phi_s\cdot \nabla \varphi_2 dx dt &= -A_s\int_{\op_T} \ifa\left(u,\Phi_s-\Phi_e\right) \varphi_2 dxdt \nonumber \\
					 & + \int_{0}^{T} \int_{\partial\op} I \varphi_2 dxdt \nonumber \\
					\int_{\ot} \sigma_e\nabla\Phi_e\cdot\nabla\varphi_3 dxdt &= d_1\int_{\ot} \sigma_e u \mathbf{f} \cdot \nabla\varphi_3 dxdt \nonumber \\
					& + A_s \int_{\op_T} \ifa\left(u,\Phi_s-\Phi_e\right)\varphi_3 dxdt \nonumber 
				\end{align}
				for all $\left(\varphi_1,\varphi_2,\varphi_3\right)\in \left(H^1\left(0,T;W^{1,2}\left(\Omega\right)\right)\right)^3$ so that $\varphi_i\left(x,T\right)=0$ for $i=1,2,3$.
			\end{itemize}
		\end{definition}
		
		Our main Theorem is the following; 
		\begin{theorem}\label{MainTheorem}
			Under the assumptions (H1)-(HN), there is a number $T^* > 0$ so that equations \eqref{temp}-\eqref{electrolytepot} have a weak solution in the sense of definition \ref{weaksolutiondefn} in $\Omega_{T^*}$. 
		\end{theorem}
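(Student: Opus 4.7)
The plan is to construct a solution via regularization, Schauder's fixed-point theorem, and a passage to the limit, all on a short time interval $[0,T^*]$ whose length is chosen to prevent the quadratic-gradient source in \eqref{temp} from violating a priori bounds on $u$. I would first regularize the Butler--Volmer nonlinearity by replacing $1/u$ with $1/\max(u,\varepsilon)$ inside the exponentials, so that $\ifa$ becomes Lipschitz in $u$ while preserving its monotonicity in $\eta=\Phi_s-\Phi_e$. The target time $T^*$ will be taken independent of $\varepsilon$.

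For the fixed-point argument, fix constants $0<m<\min_\Omega u_0\le\max_\Omega u_0<M$, and introduce the closed convex set $\mathcal{K}_{T^*}=\{\hat u\in C(\overline{\Omega_{T^*}}) : m\le\hat u\le M\}$. Given $\hat u\in\mathcal{K}_{T^*}$, I would first solve the coupled elliptic pair \eqref{solidpot}--\eqref{electrolytepot} with the exponentials evaluated at $\hat u$; monotonicity of the coupling in $\eta$ makes the system amenable to standard monotone-operator theory on $\mathbb{Y}$, with the constant of integration pinned by the normalization in (D2). The boundedness of $\hat u$ on $[m,M]$ yields $L^\infty$ bounds on $(\Phi_s,\Phi_e)$ via Stampacchia/Moser truncation---assumption (H7) enters here in the Poincar\'e-type inequality that controls $\Phi_e$ on the separator by its values on $\op$, where the monotone coupling provides coercivity---and Meyers' theorem upgrades the gradients to $\nabla\Phi_s,\nabla\Phi_e\in L^{2+\delta}$ for some $\delta>0$. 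The source $Q$ then lies in $L^{1+\delta/2}$ with bounds depending only on $m,M$, and De Giorgi--Nash--Moser theory produces a unique $u\in C^{0,\alpha}(\overline{\Omega_{T^*}})$. The resulting map $\mathcal{T}:\hat u\mapsto u$ is continuous and has precompact image in $C(\overline{\Omega_{T^*}})$.

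The self-mapping property $\mathcal{T}(\mathcal{K}_{T^*})\subset\mathcal{K}_{T^*}$ is exactly what determines the local time: the parabolic $L^\infty$ estimate with source in $L^{1+\delta/2}$ gives $\|u-u_0\|_{L^\infty(\Omega_{T^*})}\le C(m,M)(T^*)^\gamma$ for some $\gamma>0$. Taking $T^*$ small enough keeps $u$ strictly inside $[m,M]$; Schauder's theorem then produces a weak solution of the regularized system on $[0,T^*]$.

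The main obstacle is passing to the limit $\varepsilon\to 0$ in the quadratic gradient terms $|\nabla\Phi_s^{(\varepsilon)}|^2$ and $|\nabla\Phi_e^{(\varepsilon)}|^2$ appearing in $Q$: the uniform H\"older bound gives strong convergence $u^{(\varepsilon)}\to u$ in $C(\overline{\Omega_{T^*}})$, but the gradients of the potentials converge only weakly in $L^{2+\delta}$, which is insufficient. I would recover strong $L^2$ convergence of the gradients by a monotonicity-Cauchy argument: subtracting the elliptic equations at two regularization levels, testing against $(\Phi_s^{(\varepsilon)}-\Phi_s^{(\varepsilon')},\Phi_e^{(\varepsilon)}-\Phi_e^{(\varepsilon')})$, and using the uniform convergence of $u^{(\varepsilon)}$ together with the limit bound $u\ge m>0$ to show that $\ifa(u^{(\varepsilon)},\Phi_s^{(\varepsilon)}-\Phi_e^{(\varepsilon)})$ is Cauchy in $L^2$. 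The resulting strong gradient convergence then identifies the limit as a weak solution in the sense of Definition~\ref{weaksolutiondefn}.
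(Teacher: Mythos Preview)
Your overall strategy is close in spirit to the paper's, but there is a genuine gap in the regularity step. Meyers' theorem only furnishes $\nabla\Phi_s,\nabla\Phi_e\in L^{2+\delta}$ with a \emph{small} $\delta>0$ determined by the ellipticity ratio of $\sigma_s,\sigma_e$, so the source $Q$ lies merely in $L^{1+\delta/2}(\Omega_{T^*})$. This is far below the threshold $q>(N+2)/2$ that parabolic De~Giorgi--Nash--Moser theory requires of the inhomogeneity in order to conclude $u\in L^\infty$, let alone $u\in C^{0,\alpha}$; consequently your map $\mathcal T$ need not land in $C(\overline{\Omega_{T^*}})$, the small-time estimate $\|u-u_0\|_{L^\infty}\le C(m,M)(T^*)^\gamma$ is unjustified, and the self-mapping property fails. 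The remedy is already implicit in your own outline: once the Moser iteration gives $L^\infty$ bounds on $(\Phi_s,\Phi_e)$, the Butler--Volmer right-hand side is itself in $L^\infty$, so the $L^p$ elliptic theory (Lemma~\ref{lpelliptic}) yields $\nabla\Phi_s,\nabla\Phi_e\in L^p$ for every $p<\infty$, hence $Q\in L^{p/2}$ with $p/2>(N+2)/2$, and then Lemma~\ref{linfparabolic} and the parabolic H\"older estimate apply. Replace Meyers by this and the argument can be completed.

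A second, smaller point: your $\varepsilon\to 0$ passage is unnecessary. Since you already confine $\hat u$ to $[m,M]$ with $m>0$, taking $\varepsilon<m$ makes $\max(\hat u,\varepsilon)=\hat u$ on $\mathcal K_{T^*}$, so the regularized and original systems coincide there and the Schauder fixed point is directly a solution on $[0,T^*]$; the strong-gradient-convergence argument can be dropped. The paper organizes things a bit differently: it truncates via $u_0-\te(u_0-u)$, which pins the temperature-dependent coefficients between $L_0-\varepsilon$ and $\|u_0\|_\infty+\varepsilon$ \emph{independently of $u$}. All bounds on $(\Phi_s,\Phi_e)$ are then uniform in $u$, so Leray--Schauder applies on an arbitrary time interval $[0,T]$ with no self-mapping constraint; only afterward is $T^*$ chosen small enough that $|u-u_0|\le\varepsilon$, which undoes the truncation. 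Your approach builds the time restriction into the fixed-point set instead; either route works once the integrability gap above is repaired.
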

		
		Before continuing we would like to make a few remarks about our result. The first is that while Theorem \ref{MainTheorem} guarantees a small time $T^*$ for which the temperature remains bounded, we are still not able to determine the behavior of $u$ has $T\rightarrow \infty$. The terms $\left|\nabla\phi_s\right|^2$ and $\left|\nabla\Phi_e\right|^2$ appearing in $Q$ are related to $e^{\frac{1}{u}}$ through the Butler-Volmer equation, and to $u^2$ through \eqref{electrolytepot}. Parabolic equations with a squared term are already known to feature blow up, \cite{CF}, and the addition of the exponential nonlinearity in the Butler-Volmer equation is not expected to improve the situation. 
		\newline
		In order to avoid the singularity at $u=0$ in the Butler-Volmer equation, and the possibility of blow-up for $u$, we first replace $u$ in \eqref{electrolytepot} and in \eqref{ifaradefn} with the expression, 
		\begin{equation}
			u \rightarrow u_0 - \te\left(u_0 - u\right) \nonumber 
		\end{equation}
		Where $\te$ is given as in \eqref{tedefn}, and $\varepsilon$ is chosen to be smaller than the minimum value of $u_0$. The removes the possibility of a singularity coming from $u$, and allows us to prove the existence of continuous solutions to the modified problem for all time. The continuity of solutions then allow us to choose a small time $T^*$ so that, 
		\begin{equation}
			\left|u_0 - u\right| \leq \varepsilon \ \mbox{ in $\Omega_{T^*}$. } \nonumber 
		\end{equation}
		Then, over the time interval $\left[0,T^*\right]$ we have that, 
		\begin{equation}
			u_0-\te\left(u_0-u\right) = u \nonumber 
		\end{equation}
		This provides us with a continuous solution to \eqref{temp}-\eqref{electrolytepot}.

		The layout for the rest of the article is as follows. In the second section we will detail the basic preliminary definitions and results which we will use throughout the rest of the work. In the third section we will fix $u$ to be a given continuous function.  We then prove the existence of bounded weak solutions $\Phi_s$ and $\Phi_e$. If $\Phi_s$ and $\Phi_e$ are bounded, then they must belong to $W^{1,p}\left(\Omega\right)$ on account of lemma \ref{lpelliptic}. This in turn will imply that $u$ must be continuous from the classical regularity theory for parabolic equations \cite{LSU}. Then, in the fourth and final section, we will use the results of section three to design a fixed-point map over the continuous functions for the temperature $u$. We use the Leray-Schauder Theorem \ref{LSTheorem} to prove the existence of a fixed-point and obtain theorem \ref{MainTheorem}.   
		
		\section{Preliminaries}
		
		In this section we will lay out the relevant definitions and results which will be used throughout this work. We begin by defining the function spaces which we will be working in.  
		
		We first define the Sobolev spaces. We suppose that $\Omega$ is a bounded domain in $\mathbb{R}^N$. For a given $p\geq1$ and a positive integer $k$ we set,
		\begin{equation}
			\left\|u\right\|_{W^{k,p}\left(\Omega\right)} = \left(\int_{\Omega} \sum_{\left|\alpha\right|\leq k} \left|D^{\alpha} u\right|^p dx \right)^{\frac{1}{p}} \nonumber 
		\end{equation} 
		We then define the Sobolev space $W^{k,p}\left(\Omega\right)$ to be the closure of $C^{\infty}\left(\Omega\right)$ under the norm $\left\|\cdot\right\|_{W^{k,p}\left(\Omega\right)}$. Properties of the spaces $W^{k,p}\left(\Omega\right)$ can be found in \cite{GT} chap. 7.
		
		We will also make use of the spaces of H\"{o}lder continuous functions. For $\beta\in\left(0,1\right)$ We define the space of H\"{o}lder continuous functions with exponent $\beta$, $C^{\beta}\left(\overline{\Omega}\right)$, to be all of those functions $f$ so that,
		\begin{equation}
			\left[f\right]_{\beta,\Omega} = \sup_{x,y\in\Omega} \frac{\left|f(x)-f(y)\right|}{\left|x-y\right|^{\beta}} < \infty. \nonumber 
		\end{equation}
		In a similar way we define the parabolic H\"{o}lder space $C^{\beta,\frac{\beta}{2}}\left(\overline{\ot}\right)$ as all of the functions $f$ so that,
		\begin{equation}
			\left[f\right]_{\beta,\ot} = \sup_{\left(x,t\right),\left(y,\tau\right)\in\ot } \frac{\left|f(x,t)-f(y,\tau)\right|}{\left(\left|x-y\right|^2+\left|t-\tau\right|\right)^{\frac{\beta}{2}} } < \infty. \nonumber 
		\end{equation}
		More information on these spaces can be found in \cite{GT},\cite{LSU}. 
		
		Our existence assertion is provided by the Leray-Schauder fixed-point Theorem, (\cite{GT} chap. 11)
		\begin{lemma}\label{LSTheorem}
			Let $\mathbb{T}$ be a compact mapping of a Banach space $\mathbb{B}$ into itself, and suppose there exists a constant $M$ such that,
			\begin{equation}
				\left\| x \right\|_{\mathbb{B}} < M, \nonumber 
			\end{equation}
			for all $x \in \mathbb{B}$ and $\sigma \in \left[0,1\right]$ satisfying $x = \sigma \mathbb{T}x$. Then $\mathbb{T}$ has a fixed point. 
		\end{lemma}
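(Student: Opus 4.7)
The plan is to reduce Lemma \ref{LSTheorem} to Schauder's fixed-point theorem through a standard radial truncation trick. First I would introduce the radial retraction $r : \mathbb{B} \to \overline{B_M(0)}$ onto the closed ball of radius $M$, namely the identity inside the ball and the radial projection onto the sphere $\|y\|_{\mathbb{B}} = M$ outside the ball. A direct check shows $r$ is continuous, so the composition $\tilde{\mathbb{T}} := r \circ \mathbb{T}$ is continuous; moreover, since $\mathbb{T}$ is compact and $r$ is continuous, $\tilde{\mathbb{T}}$ is also compact, and by construction it maps the closed convex bounded set $\overline{B_M(0)}$ into itself.

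Next I would apply Schauder's fixed-point theorem to $\tilde{\mathbb{T}} : \overline{B_M(0)} \to \overline{B_M(0)}$ to obtain a point $x_0 \in \overline{B_M(0)}$ satisfying $x_0 = r(\mathbb{T}(x_0))$. The remaining task is to show that $r$ does not actually truncate at $\mathbb{T}(x_0)$, i.e., that $\|\mathbb{T}(x_0)\|_{\mathbb{B}} \leq M$, for then $x_0 = \mathbb{T}(x_0)$ is the desired fixed point. I would argue by contradiction: if instead $\|\mathbb{T}(x_0)\|_{\mathbb{B}} > M$, then the radial projection places $x_0$ exactly on the sphere, so $\|x_0\|_{\mathbb{B}} = M$, and by definition of $r$ one has $x_0 = \sigma_0 \mathbb{T}(x_0)$ with $\sigma_0 := M/\|\mathbb{T}(x_0)\|_{\mathbb{B}} \in (0,1)$. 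The standing hypothesis of the lemma applied to the pair $(x_0,\sigma_0)$ then forces $\|x_0\|_{\mathbb{B}} < M$, contradicting $\|x_0\|_{\mathbb{B}} = M$.

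The main subtlety, and the only real ingredient beyond Schauder, is exploiting the strictness of the inequality $\|x\|_{\mathbb{B}} < M$ in the hypothesis: this strict inequality is exactly what rules out the boundary case and closes the argument, whereas a non-strict a priori bound would not suffice to exclude the possibility of the retraction being active. Everything else is routine verification, namely that $r$ preserves continuity and compactness under composition with $\mathbb{T}$, and that the degenerate case $\sigma = 0$ is trivially covered since it forces $x = 0$ with $\|x\|_{\mathbb{B}} = 0 < M$.
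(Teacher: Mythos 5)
Your argument is correct and is precisely the standard proof of this lemma: radial retraction $r$ onto $\overline{B_M(0)}$, Schauder's fixed-point theorem applied to $r\circ\mathbb{T}$, and the strict a priori bound to exclude the case $\|\mathbb{T}(x_0)\|_{\mathbb{B}}>M$; the paper itself offers no proof but simply cites \cite{GT} (Chapter 11), where essentially this same argument appears, so your proposal matches the intended source. One minor caveat: your remark that a non-strict bound ``would not suffice'' applies only to that particular contradiction step as written --- the statement with a non-strict bound $\|x\|_{\mathbb{B}}\leq M$ still holds, since one may simply apply the strict version with $M$ replaced by $M+1$.
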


		Next we collect some known estimates for parabolic and elliptic equations that we will need for our later development. Our first is an $L^p$ estimate for elliptic equations in divergence form. \cite{HD}
		\begin{lemma}\label{lpelliptic}
			Suppose that $p\in\left(1,\infty\right)$ and $u$ is a weak solution in $W^{1,2}\left(\Omega\right)$ to the elliptic boundary-value problem, 
			\begin{align}
				-\mdiv\left( A\nabla u \right) &= -\mdiv\left(\mathbf{g} \right) + f, \ \mbox{in $\Omega$ } \nonumber \\
				(A\nabla u-\mathbf{g}) \cdot \mathbf{n} &= h, \ \mbox{ on $\partial\Omega$ } \nonumber 
			\end{align}
			Then there is a constant $c$ so that, 
			\begin{align}
				\left\|u \right\|_{W^{1,p}\left(\Omega\right)} \leq c \left\| f\right\|_{p,\Omega} + c \left\| g\right\|_{p,\Omega} + c\left\|h\right\|_{p,\Omega} \nonumber 
			\end{align}
		\end{lemma}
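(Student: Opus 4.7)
The plan is to treat this as a standard $L^p$-regularity assertion for divergence-form elliptic equations with Neumann data, whose proof can be found in \cite{HD}. I would reconstruct it by combining a baseline $L^2$ energy estimate, Calder\'on--Zygmund theory for constant-coefficient models (both in the interior and near the boundary), and a duality argument to reach exponents below $2$.

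\textbf{Step 1 ($p=2$).} I would first establish the case $p=2$ by the standard energy method. Because the problem is a pure Neumann problem, $u$ is determined only up to an additive constant, so I would normalize $\int_\Omega u\,dx = 0$ and assume the compatibility condition $\int_\Omega f\,dx + \int_{\partial\Omega} h\,dS = 0$. Testing the weak formulation with $u$, invoking uniform ellipticity of $A$ (coming from (H2)), and applying the Poincar\'e inequality for mean-zero functions yield
\[
\|u\|_{W^{1,2}(\Omega)} \le c\bigl( \|f\|_{2,\Omega} + \|\mathbf{g}\|_{2,\Omega} + \|h\|_{2,\Omega}\bigr).
\]

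\textbf{Step 2 ($p>2$).} For exponents above $2$ I would freeze the coefficients of $A$ at a point $x_0$ and compare $u$ with the solution $v$ of the constant-coefficient problem $-\mdiv(A(x_0)\nabla v) = -\mdiv \mathbf{g} + f$ on a small interior ball, or on a boundary half-ball for points near $\partial\Omega$. The frozen problem admits an explicit Green's-function representation, so classical Calder\'on--Zygmund theory produces the $L^p$ bound on $\nabla v$. The perturbation error coming from the variable coefficients is absorbed either by smallness of the ball (if $A$ is continuous or VMO) or, for merely bounded measurable $A$, by a Meyers-type reverse H\"older inequality combined with Gehring's lemma; a standard iteration bootstraps the integrability above $2$ up to the targeted $p$.

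\textbf{Step 3 ($p<2$) and main obstacle.} For $p\in(1,2)$ I would argue by duality: writing $\|\nabla u\|_{p,\Omega} = \sup\{\int_\Omega \nabla u\cdot\varphi\,dx : \|\varphi\|_{p',\Omega}\le 1\}$, I would solve the adjoint Neumann problem $-\mdiv(A^T\nabla w) = -\mdiv\varphi$ in $\Omega$ with $(A^T\nabla w - \varphi)\cdot\mathbf{n}=0$ on $\partial\Omega$; since $p' > 2$, Step 2 applies to $w$, and integrating by parts against $u$ controls the pairing by $\|f\|_p + \|\mathbf{g}\|_p + \|h\|_p$. The chief technical obstacle throughout the proof is the Neumann boundary treatment in $L^p$. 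One must flatten $\partial\Omega$ locally through a smooth change of variables and then extend $u$ by even reflection across the flattened piece of the boundary, which converts the datum $h$ into an additional source term transferred to the reflection hyperplane. Keeping careful track of the Jacobians introduced by flattening and then assembling the local pieces via a finite partition of unity covering $\overline{\Omega}$ yields the global estimate asserted in the lemma.
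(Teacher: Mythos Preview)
Your sketch is a reasonable outline of the standard Calder\'on--Zygmund/duality approach to $L^p$ estimates for divergence-form Neumann problems, and it is consistent with the argument in the reference \cite{HD}. However, the paper itself does not prove this lemma at all: it is stated in the preliminaries as a known result and simply attributed to \cite{HD}, so there is no ``paper's own proof'' to compare against. Your proposal therefore goes well beyond what the paper does, which is merely to cite the estimate.
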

		
		Our next lemma is a well-known $L^{\infty}$ estimate for parabolic Neumann problems. The proof is well known see for instance \cite{LSU}. 
		\begin{lemma}\label{linfparabolic}
			Suppose that $u$ is a weak solutions of the initial boundary-value problem, 
			\begin{align}
				\partial_t u - \mdiv\left(k \nabla u \right) &= f, \ \mbox{ in $\ot$, } \nonumber \\
				k\nabla u\cdot \mathbf{n} &= g, \ \mbox{ on $ \Sigma_T$, } \nonumber \\
				u\left(x,0\right) &= u_0\left(x\right), \ \mbox{ on $\Omega\times\left(0,T\right)$. } \nonumber 
			\end{align}
			Let $p> \frac{N}{2} + 1$. Then, 
			\begin{equation}
				\sup_{\ot} \left|u \right| \leq c\left\|f\right\|_{p,\ot} + c\left\|g\right\|_{\infty,\ot} + 2\left\|u_0\right\|_{\infty,\Omega} + 1. \nonumber 
			\end{equation}
		\end{lemma}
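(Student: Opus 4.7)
The plan is to prove this classical $L^\infty$ estimate via the De Giorgi--Stampacchia level-set truncation method, applied separately to $u$ and to $-u$ (the lower bound following by reversing the signs of $f$, $g$, and $u_0$). Fix the threshold level
\[
k_0 := 2\|u_0\|_{\infty,\Omega} + \|g\|_{\infty,\Sigma_T} + 1,
\]
and for each $k \geq k_0$ use $\varphi = (u-k)_+$ as a test function in the weak formulation on $\Omega \times (0,s)$ (justified via Steklov averaging, since $\partial_t u$ is only in the dual space). Because $k > \|u_0\|_{\infty,\Omega}$ the initial data contributes zero, and the positive lower bound for the coefficient $k$ from (H2) yields the level-set energy inequality
\[
\frac{1}{2}\sup_{0<s<T}\io (u-k)_+^2(\cdot,s)\,dx + c_0 \iot |\nabla (u-k)_+|^2\,dx\,dt \leq \iot f\,(u-k)_+\,dx\,dt + \int_{\Sigma_T} g\,(u-k)_+\,dS\,dt.
\]

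Next I would apply the parabolic Sobolev embedding
\[
\|(u-k)_+\|_{L^q(\Omega_T)}^2 \leq C\Bigl(\sup_{s}\|(u-k)_+(\cdot,s)\|_{L^2(\Omega)}^2 + \|\nabla(u-k)_+\|_{L^2(\Omega_T)}^2\Bigr),\qquad q:=\frac{2(N+2)}{N},
\]
to promote the energy bound into $L^q$ control of the truncation. H\"older's inequality on the source term gives
\[
\iot f\,(u-k)_+\,dx\,dt \leq \|f\|_{p,\Omega_T}\,\|(u-k)_+\|_{L^q(\Omega_T)}\,|A(k)|^{1-\frac{1}{p}-\frac{1}{q}},
\]
where $A(k):=\{(x,t)\in\Omega_T : u(x,t) > k\}$. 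The hypothesis $p > N/2 + 1$ is precisely what makes the exponent of $|A(k)|$ strictly positive, producing the super-linear feedback in the shrinking measure that drives the iteration. The Neumann boundary contribution is handled by a parabolic trace inequality: it similarly produces a factor $|A(k)|^\alpha$ with $\alpha > 0$, together with the cancellation $\|g\|_{\infty,\Sigma_T} \leq k_0$ available once the level is shifted.

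Combining these estimates produces a Stampacchia recursion of the form
\[
(h-k)^q\, |A(h)| \leq C\bigl(\|f\|_{p,\Omega_T} + \|g\|_{\infty,\Sigma_T} + \|u_0\|_{\infty,\Omega} + 1\bigr)^q\, |A(k)|^{1+\delta},\qquad h > k \geq k_0,
\]
to which the standard iteration lemma (e.g.\ \cite{LSU}, Ch.~II) applies: there is a finite $d$ depending linearly on the right-hand data for which $|A(k_0 + d)| = 0$, hence $u \leq k_0 + d$ almost everywhere in $\Omega_T$. Repeating the argument on $-u$ yields the matching lower bound and produces the announced constant. The main technical obstacle is the Neumann boundary term: unlike the Dirichlet case, $(u-k)_+$ does not vanish on $\partial\Omega$, so one must invoke a parabolic trace inequality with just enough precision to extract a measure factor $|A(k)|^\alpha$ of sufficient power for the iteration to close; this is also where the sharp linear dependence on $\|g\|_{\infty,\Sigma_T}$ in the final bound arises.
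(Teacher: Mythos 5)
Your De Giorgi--Stampacchia truncation-and-iteration argument is exactly the classical proof of this lemma; the paper offers no proof of its own and simply cites \cite{LSU}, where the same level-set energy inequality, parabolic embedding, and superlinear recursion in $|A(k)|$ are carried out (including the conormal boundary term). One small imprecision: the hypothesis $p>\frac{N}{2}+1$ is not what makes the exponent of $|A(k)|$ positive (that holds for smaller $p$ too); it is what makes the iterated exponent $q\left(1-\frac{1}{p}-\frac{1}{q}\right)$ exceed $1$, i.e.\ it guarantees the superlinear feedback you invoke, so the recursion closes.
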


		\section{Existence Results for the Electric Potentials}
		
		In this section we prove some existence results for the electric potentials $\Phi_s$ and $\Phi_e$. For the duration of this section we will assume that the temperature $u$ is a given continuous function on $\ot$. 
		
		To get started, we first define the function $\te$ on $\mathbb{R}$ by, 
		\begin{equation}\label{tedefn}
			\te\left(s\right) = \left\{ \begin{array}{lr} \varepsilon &  s > \varepsilon \\
				s & \left|s\right| \leq \varepsilon, \\
				-\varepsilon & s<-\varepsilon, \end{array} \right.
		\end{equation}
		
		Then let, 
		\begin{equation}\label{epsiloncondition}
			L_0 = \min_{\Omega} u_0 > 0, \ \ \varepsilon \in (0,L_0)
		\end{equation}
			For $\varepsilon \in (0, L0)$, we have,
		\begin{equation}
			u_0 - \te\left(u_0 - u \right) \geq L_0 - \varepsilon > 0. 
		\end{equation}
		We then approximate $i_{fara}$ as, 
		\begin{equation}\label{ifaraepsilon}
			\ifae(u, \Phi_s-\Phi_e) = g_1\left[e^{\frac{\alpha(\Phi_s-\Phi_e)}{u_0 -\te\left(u_0-u\right)}} e^{\frac{-\alpha U}{u_0-\te\left(u_0-u\right)} } - e^{\frac{-\alpha\left(\Phi_s-\Phi_e\right)}{u_0-\te\left(u_0-u\right)}} e^{\frac{\alpha U}{u_0 -\te\left(u_0-u\right)} } \right] 
		\end{equation}
		By our choice of $\varepsilon$, the singularity coming from the term $\frac{1}{u}$ has been taken care of.

		Next, as a first step towards proving our main theorem we consider the following problem, 
		\begin{align}
			-\mdiv\left(\sigma_s\nabla\Phi_s\right) +\tau \Phi_s &= -A_s\ifae\left(u, \Phi_s-\Phi_e\right), \ \ \mbox{ in $\op$, } \label{psapprox} \\
			-\mdiv\left(\sigma_e\nabla \Phi_e\right) + \tau \Phi_e &= -\mdiv\left(d_1\sigma_e \left( u_0 - \te\left(u_0-u\right)\right) \mathbf{f} \right) + A_s \ifae\left( u, \Phi_e - \Phi_s\right)\chi_{\op}, \ \  \mbox{ in $\Omega$, } \label{peapprox} \\ 
			\sigma_s \nabla \Phi_s \cdot \mathbf{n} &= I, \ \ \mbox{ on $\partial\op$, } \label{psapproxboundary} \\
			\sigma_e \nabla \Phi_e \cdot \mathbf{n} &= 0 , \ \ \mbox{ on $\partial\Omega$. } \label{peapproxboundary} 
		\end{align}
		Set
		$$\mathbb{Y}_1=W^{1,2}(\op)\times W^{1,2}(\Omega).$$
		For a given function $u \in C\left(\overline{\ot} \right)$, and given number $\tau > 0$. A weak solution to \eqref{psapprox}-\eqref{peapproxboundary} are functions $\left(\Phi_s, \Phi_e\right)$ in the spaces, $\mathbb{X}\times\mathbb{Y}_1$ such that, 
		\begin{align}
			\int_{\op} \sigma_s \nabla \Phi_s \cdot \nabla \varphi_1 dx + \tau \int_{\op} \Phi_s \varphi_1 dx &= - A_s \int_{\op} \ifae\left(u, \Phi_s-\Phi_e\right) \varphi_1 dx + \int_{\partial\op} I \varphi_1 dS, \nonumber \\
			\int_{\Omega} \sigma_e \nabla \Phi_e \cdot \nabla \varphi_2 dx + \tau \int_{\op} \Phi_e \varphi_2 dx &= d_1\int_{\Omega} \sigma_e \left( u_0 - \te\left(u_0-u\right)\right) \mathbf{f}\cdot \nabla \varphi_2 dx \nonumber \\
			& + A_s \int_{\op} \ifae\left(u,\Phi_s-\Phi_e\right)\varphi_2 dx \nonumber 
		\end{align}
		For all $\left(\varphi_1,\varphi_2\right) \in \left(W^{1,2}\left(\Omega\right)\right)^2$. 
		
		We now have the following existence result for \eqref{psapprox}-\eqref{peapproxboundary}, 
		\begin{proposition}\label{potapprox}
			Let $u \in C\left(\overline{\ot}\right)$ and $\tau > 0$. Then, for every $t\in\left[0,T\right]$ there is a unique weak solution to \eqref{psapprox}-\eqref{peapproxboundary}. 
		\end{proposition}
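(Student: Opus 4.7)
The main obstacle is that the Butler--Volmer nonlinearity $\ifae$ grows exponentially in $\Phi_s-\Phi_e$, so on a general element of $\mathbb{Y}_1$ it is not integrable in dimension $N\ge 2$, yet the conclusion requires $L^\infty$ solutions. My plan is therefore to regularize by truncation, solve the regularized system by a variational method that exploits its strict convexity, extract an $L^\infty$ bound independent of the truncation from the favorable sign of the coupling, and then pass to the limit.

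Introduce $i_M(u,s)=\ifae(u,T_M(s))$ where $T_M$ is truncation at level $\pm M$, and let $F_M(u,s)=\int_0^s i_M(u,r)\,dr$. By \eqref{epsiloncondition} the denominator $u_0-\te(u_0-u)$ is bounded between positive constants, so $F_M(u,\cdot)$ is convex, continuous, and bounded below by a linear function of $s$. Consider the functional on $\mathbb{Y}_1$
\begin{align*}
J_M(\Phi_s,\Phi_e) &= \tfrac12\int_{\op}\sigma_s|\nabla\Phi_s|^2\,dx+\tfrac{\tau}{2}\int_{\op}\Phi_s^2\,dx+\tfrac12\int_\Omega\sigma_e|\nabla\Phi_e|^2\,dx+\tfrac{\tau}{2}\int_\Omega\Phi_e^2\,dx\\
&\quad+A_s\int_{\op}F_M(u,\Phi_s-\Phi_e)\,dx-\int_{\partial\op}I\,\Phi_s\,dS-d_1\int_\Omega\sigma_e\bigl(u_0-\te(u_0-u)\bigr)\mathbf{f}\cdot\nabla\Phi_e\,dx.
\end{align*}
By (H2), (H5), (H6) and $\tau>0$, $J_M$ is strictly convex, coercive and weakly lower-semicontinuous on $\mathbb{Y}_1$, so the direct method of the calculus of variations produces a unique minimizer $(\Phi_s^{M},\Phi_e^{M})$ whose Euler--Lagrange equation is precisely the weak form of the truncated system (reading the coupling in \eqref{peapprox} as dictated by charge conservation, i.e. $\ifae(u,\Phi_e-\Phi_s)$ interpreted as $-\ifae(u,\Phi_s-\Phi_e)$).

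For the $L^\infty$ bound I test the first truncated equation with $(\Phi_s^{M}-k)_+$ and the second with $(\Phi_e^{M}-k)_+$ and add. Since $s\mapsto(s-k)_+$ is nondecreasing, the difference $(\Phi_s^{M}-k)_+-(\Phi_e^{M}-k)_+$ carries the sign of $\Phi_s^{M}-\Phi_e^{M}$; writing $i_M(u,s)=[i_M(u,s)-i_M(u,0)]+i_M(u,0)$, strict monotonicity of $i_M(u,\cdot)$ makes the first piece yield a nonnegative term on the left-hand side that I discard, while the second piece is uniformly bounded and absorbs together with the contributions from $I$ and $\mathbf{f}$. Stampacchia iteration then gives $\|\Phi_s^{M}\|_\infty+\|\Phi_e^{M}\|_\infty\le C$ with $C$ independent of $M$; the symmetric choice $(-\Phi_s^{M}-k)_+,(-\Phi_e^{M}-k)_+$ furnishes the matching lower bound. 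Choosing $M>C$ the truncation is inert on $(\Phi_s^{M},\Phi_e^{M})$, so it solves the original system and lies in $\mathbb{X}\cap\mathbb{Y}_1$. Uniqueness follows by testing the difference of two solutions against itself: coercivity from $\tau>0$ together with the nonnegative sign of the coupling (again by monotonicity of $\ifae(u,\cdot)$) forces the difference to vanish. I expect the technical heart of the argument to be this $L^\infty$ estimate, since everything depends on arranging the Stampacchia test functions so that the two copies of the exponential nonlinearity combine with the correct sign.
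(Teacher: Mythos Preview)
Your argument is correct but takes a genuinely different route from the paper. The paper proves existence via the Leray--Schauder theorem: it freezes the argument of $\ifae$, solves the resulting \emph{linear} elliptic system to define a map $\mathbb{B}$ on $C(\overline{\op})\times C(\overline{\Omega})$, checks continuity and compactness, and then obtains the a~priori $L^\infty$ bound on fixed points by a Moser iteration with test functions $(\Phi_s^++b)^n$, $(\Phi_e^++b)^n$. You instead truncate the nonlinearity, observe that the coupled system is the Euler--Lagrange system of a strictly convex coercive functional on $\mathbb{Y}_1$ (this gradient structure is real and the paper does not exploit it), solve the truncated problem by the direct method, and then remove the truncation via a Stampacchia/De~Giorgi level-set estimate. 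Both arguments rest on the \emph{same} key observation---the monotonicity of $\ifae(u,\cdot)$ forces the coupling term to have a good sign when the two equations are tested with nondecreasing functions of $\Phi_s$ and $\Phi_e$ respectively---and your identification of this as the heart of the matter is exactly right. Your variational route is more economical (existence and uniqueness of the regularized problem come for free from strict convexity), while the paper's fixed-point scheme would survive perturbations that destroy the variational structure. One thing to note: the paper goes further and makes the $L^\infty$ bound \emph{independent of $\tau$} by first proving a $\tau$-free $L^2$ estimate (their Claim~3.2, using the mean-zero identity and hypothesis (H7)); this is not needed for Proposition~\ref{potapprox} itself, but you will need it when you send $\tau\to 0$ in the next step, and your Stampacchia bound as sketched leans on the $\tau\|w\|_2^2$ term, so be prepared to insert the analogue of that $L^2$ claim later.
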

		\begin{proof}
			We first show that any solution $\left(\Phi_s, \Phi_e\right)$ must be unique. To do so, we first suppose that $\left(\Phi^{(1)}_s, \Phi^{(1)}_e\right), \left(\Phi^{(2)}_s, \Phi^{(2)}_e\right) \in \mathbb{X} \times \mathbb{Y}_1$ are two solutions to \eqref{psapprox}-\eqref{peapproxboundary}. Then, by subtracting the equations for $\left(\Phi^{(2)}_s, \Phi^{(2)}_e\right)$ from those for $\left(\Phi^{(1)}_s, \Phi^{(1)}_e\right)$ we derive, 
			\begin{align}
				-&\mdiv\left[\sigma_s\nabla\left(\Phi^{(1)}_s-\Phi^{(2)}_s\right)\right] + \tau\left(\Phi^{(1)}_s-\Phi^{(2)}_s\right) \nonumber \\
				&= -A_s\ifae\left(u,\Phi^{(1)}_s-\Phi^{(1)}_e\right) + A_s\ifae\left(u,\Phi^{(2)}_s-\Phi^{(2)}_e\right), \ \ \mbox{ in $\op$, } \label{psdifference} \\
				-&\mdiv\left[\sigma_e\nabla\left(\Phi^{(1)}_e - \Phi^{(2)}_e\right)\right] + \tau\left(\Phi^{(1)}_e-\Phi^{(2)}_e\right) \nonumber \\
				&= A_s\ifae\left(u,\Phi^{(1)}_s-\Phi^{(1)}_e\right) -A_s \ifae\left(u, \Phi^{(2)}_s-\Phi^{(2)}_e\right)\chi_{\op}, \ \ \mbox{ in $\Omega$, } \label{pedifference} \\
				&\sigma_s \nabla\left(\Phi^{(1)}_s-\Phi^{(2)}_s\right) \cdot \mathbf{n} = 0, \ \ \mbox{ on $\partial\op$, }  \\
				&\sigma_e \nabla \left(\Phi^{(1)}_e-\Phi^{(2)}_e\right) \cdot \mathbf{n} = 0, \ \ \mbox{ on $\partial\Omega$. } 
			\end{align}
			We now use $\left(\Phi^{(1)}_s - \Phi^{(2)}_s\right)$ as a test function in \eqref{psdifference}, and $\left(\Phi^{(1)}_e - \Phi^{(2)}_e\right)$ as a test function in \eqref{pedifference} and add together the resulting equations to derive, 
			\begin{align}
				\int_{\op}& \sigma_s \left|\nabla\left(\Phi^{(1)}_s - \Phi^{(2)}_s\right)\right|^2 dx + \int_{\Omega} \sigma_e \left|\nabla\left(\Phi^{(1)}_e - \Phi^{(2)}_e\right)\right|^2 dx \nonumber  \\
				& + \tau \int_{\op} \left[ \left(\Phi^{(1)}_s - \Phi^{(2)}_s\right) \right]^2 dx + \tau \int_{\Omega} \left[ \left(\Phi^{(1)}_e - \Phi^{(2)}_e\right) \right]^2 dx \nonumber  \\
				& = - A_s \int_{\op} G dx. \label{uniquenessone}
			\end{align}
			Where, 
			\begin{equation}
				G = \left[ \ifae\left(u, \Phi^{(1)}_s-\Phi^{(1)}_e\right) - \ifae\left(u, \Phi^{(2)}_s-\Phi^{(2)}_e\right)\right]\left[ \left(\Phi^{(1)}_s-\Phi^{(1)}_e\right)-\left(\Phi^{(2)}_s-\Phi^{(2)}_e\right)\right] \nonumber 
			\end{equation}
			For the integral of $G$ we use the fact that $\ifae\left(y_1, y_2\right)$ is an increasing function of its second variable to conclude, 
			\begin{equation}
				G\geq 0. \nonumber 
			\end{equation}
			Thus the right-hand side of \eqref{uniquenessone} is non-positive. As a result, equation \eqref{uniquenessone} implies that, 
			\begin{equation}
				\Phi^{(1)}_s = \Phi^{(2)}_s, \quad \Phi^{(1)}_e = \Phi^{(2)}_e, \ \ \mbox{ a.e. on $\op$ and $\Omega$ respectively. } \nonumber
			\end{equation}
			This completes the proof of uniqueness.

			We now move to the proof of existence. To prove existence we will use the Leray-Schauder Theorem \ref{LSTheorem}. We define a mapping $\mathbb{B}$, taking $C\left(\overline{\op}\right) \times C\left(\overline{\Omega}\right)$ into itself, in the following manner, given $\left(\varphi_s, \varphi_e\right) \in C\left(\overline{\op}\right) \times C\left(\overline{\Omega}\right)$, we define $\left(\Phi_s, \Phi_e\right)$ to be the solutions to the problems, 
			\begin{align}
				-\mdiv\left(\sigma_s\nabla\Phi_s\right) + \tau\Phi_s &= -A_s\ifae\left(u,\varphi_s-\varphi_e\right), \ \ \mbox{ in $\op$, } \label{psexists} \\
				-\mdiv\left(\sigma_e\nabla\Phi_e\right) + \tau\Phi_e &= -\mdiv\left(d_s\sigma_e\left(u_0-\te\left(u_0 - u\right)\right) \mathbf{f}\right) + A_s\ifae\left(u,\varphi_s-\varphi_e\right)\chi_{\op}, \ \ \mbox{ in $\Omega$, } \label{peexists} \\
				\sigma_s \nabla\Phi_s \cdot \mathbf{n} &= I, \ \ \mbox{ on $\partial\op$, } \\
				\sigma_e \nabla \Phi_e \cdot \mathbf{n} &= 0, \ \ \mbox{ on $\partial\Omega$. } \label{peboundaryexists} 
			\end{align}
			By the classical existence and regularity theory for linear elliptic equations, (\cite{GT}, chap. 8), we can conclude that there are unique weak solutions $\left(\Phi_s, \Phi_e\right) \in \mathbb{X}_1 \cap \mathbb{Y}_1\equiv C^{0,\beta}(\overline{\op})\times C^{0,\beta}(\overline{\Omega})\cap W^{1,2}(\op)\times W^{1,2}(\Omega)$ to equations \eqref{psexists}-\eqref{peboundaryexists} for some $\beta\in (0,1)$. We then put $\mathbb{B}\left(\varphi_s, \varphi_e\right) = \left(\Phi_s, \Phi_e\right)$. Clearly then, $\mathbb{B}$ is a well-defined operator on $C\left(\overline{\op}\right)\times C\left(\overline{\Omega}\right)$. Since $\mathbb{X}_1$ is compactly embedded in $C\left(\overline{\op}\right)\times C\left(\overline{\Omega}\right)$, we may also conclude that $\mathbb{B}$ maps bounded sets into precompact ones. 
			
			We now have the following claim, 
			\begin{clm}
				The mapping $\mathbb{B}$ is continuous on $C\left(\overline{\op}\right) \times C\left(\overline{\Omega}\right)$. 
			\end{clm}
			\begin{proof}
				Let $\{\left(\varphi^{(n)}_s, \varphi^{(n)}_e\right)\}$ be a convergent sequence in $C\left(\overline{\op}\right) \times C\left(\overline{\Omega}\right)$, and for each $n=1,2,3,...$ put $\left(\Phi^{(n)}_s, \Phi^{(n)}_e\right) = \mathbb{B}\left(\varphi^{(n)}_s,\varphi^{(n)}_e\right)$. This corresponds to the equations, 
				\begin{align}
					-\mdiv\left(\sigma_s\nabla\Phi^{(n)}_s\right) + \tau\Phi^{(n)}_s &= -A_s\ifae\left(u,\varphi^{(n)}_s-\varphi^{(n)}_e\right), \ \ \mbox{ in $\op$, } \label{c1}  \\
					-\mdiv\left(\sigma_e\nabla\Phi^{(n)}_e\right) + \tau\Phi^{(n)}_e &= -\mdiv\left(d_s\sigma_e\left(u_0-\te\left(u_0 - u\right)\right) \mathbf{f}\right) \nonumber \\
					& + A_s\ifae\left(u,\varphi^{(n)}_s-\varphi^{(n)}_e\right)\chi_{\op}, \ \ \mbox{ in $\Omega$, } \label{c2} \\
					\sigma_s \nabla\Phi^{(n)}_s \cdot \mathbf{n} &= I, \ \ \mbox{ on $\partial\op$, } \label{c3} \\
					\sigma_e \nabla \Phi^{(n)}_e \cdot \mathbf{n} &= 0, \ \ \mbox{ on $\partial\Omega$. } \label{c4} 
				\end{align}
				Since the sequence $\left(\varphi^{(n)}_s, \varphi^{(n)}_e\right)$ is uniformly bounded in $ C\left(\overline{\op}\right) \times C\left(\overline{\Omega} \right)$ we can conclude that, $\ifae\left(u, \varphi^{(n)}_s-\varphi^{(n)}_e\right)$ is uniformly bounded in $L^{\infty}\left(\op\right)$. As a result, we then have that $\left(\Phi^{(n)}_s, \Phi^{(n)}_e\right)$ is uniformly bounded in $\mathbb{X}_1\cap \mathbb{Y}_1$. Hence, we can pass to a subsequence, which we won't relabel, so that, 
				\begin{align}
					\Phi^{(n)}_s &\rightarrow \Phi_s, \ \ \mbox{ in $C\left(\overline{\op}\right)$ and weakly in $W^{1,2}\left(\op\right)$, } \nonumber \\
					\Phi^{(n)}_e &\rightarrow \Phi_e, \ \ \mbox{ in $C\left(\overline{\Omega}\right)$ and weakly in $W^{1,2}\left(\Omega\right)$. } \nonumber 
				\end{align}
				Then, we can pass to the limit in equations, \eqref{c1}-\eqref{c4} to get, 
				\begin{align}
					-\mdiv\left(\sigma_s\nabla\Phi_s\right) + \tau\Phi_s &= -A_s\ifae\left(u,\varphi_s-\varphi_e\right), \ \ \mbox{ in $\op$, } \label{c5}  \\
					-\mdiv\left(\sigma_e\nabla\Phi_e\right) + \tau\Phi_e &= -\mdiv\left(d_s\sigma_e\left(u_0-\te\left(u_0 - u\right)\right) \mathbf{f}\right) + A_s\ifae\left(u,\varphi_s-\varphi_e\right)\chi_{\op}, \ \ \mbox{ in $\Omega$, } \label{c6} \\
					\sigma_s \nabla\Phi_s \cdot \mathbf{n} &= I, \ \ \mbox{ on $\partial\op$, } \label{c7} \\
					\sigma_e \nabla \Phi_e \cdot \mathbf{n} &= 0, \ \ \mbox{ on $\partial\Omega$. } \label{c8} 
				\end{align}
				We then subtract \eqref{c5} and \eqref{c6} from \eqref{c1} and \eqref{c2} respectively to obtain, 
				\begin{align}
					-&\mdiv\left(\sigma_s\nabla\left(\Phi^{(n)}_s-\Phi_s\right)\right) + \tau\left(\Phi^{(n)}_s-\Phi_s\right) \nonumber \\
					&= -A_s\left(\ifae\left(u,\varphi^{(n)}_s-\varphi^{(n)}_e\right)-\ifae\left(u,\varphi_s-\varphi_e\right)\right), \ \ \mbox{ in $\op$, } \label{c9}   \\
					-&\mdiv\left(\sigma_e\nabla\left(\Phi^{(n)}_e-\Phi_e\right)\right) + \tau\left(\Phi^{(n)}_e-\Phi_e\right) \nonumber \\
					&=A_s\left(\ifae\left(u,\varphi^{(n)}_s-\varphi^{(n)}_e\right)-\ifae\left(u,\varphi_s-\varphi_e\right)\right)\chi_{\op}, \ \ \mbox{ in $\Omega$, } \label{c10} \\
					&\sigma_s \nabla\left(\Phi^{(n)}_s-\Phi_s\right) \cdot \mathbf{n} = 0, \ \ \mbox{ on $\partial\op$, }  \\
					&\sigma_e \nabla\left( \Phi^{(n)}_e-\Phi_e\right) \cdot \mathbf{n} = 0, \ \ \mbox{ on $\partial\Omega$. } 
				\end{align}
				Upon using $\left(\Phi^{(n)}_s-\Phi_s\right)$ as a test function in \eqref{c9} and $\left(\Phi^{(n)}_e-\Phi_e\right)$ as a test function in \eqref{c10} we derive, 
				\begin{align}
					\Phi^{(n)}_s &\rightarrow \Phi_s, \ \ \mbox{ in $C\left(\overline{\op}\right)$ and strongly in $W^{1,2}\left(\op\right)$, } \nonumber \\
					\Phi^{(n)}_e &\rightarrow \Phi_e, \ \ \mbox{ in $C\left(\overline{\Omega}\right)$ and strongly in $W^{1,2}\left(\Omega\right)$. } \nonumber
				\end{align}
				Finally, since solutions to \eqref{c5}-\eqref{c8} are unique, we can conclude that every subsequence of $\{\left(\Phi^{(n)}_s,\Phi^{(n)}_e\right)\}$ has a further convergent subsequence, all of which converge to the same limit $\mathbb{B}\left(\varphi_s,\varphi_e\right)$. Therefore, the overall limit converges, and we can conclude that $\mathbb{B}$ is continuous. 
			\end{proof}
			
			In order to use the Leray-Schauder Theorem, we still need to show that if $\delta\in\left(0, 1\right)$ and $\left(\Phi_s, \Phi_e\right) \in C\left(\overline{\op}\right) \times C\left(\overline{\Omega}\right)$ is such that, 
			\begin{equation}\label{deltaequation}
				\left(\Phi_s, \Phi_e\right) = \delta\mathbb{B}\left(\Phi_s, \Phi_e\right),
			\end{equation}
			Then there is a constant $c$ so that,
			\begin{equation}\label{LScond}
				\left\|\left(\Phi_s\chi_{\op},\Phi_e\right)\right\|_{\infty,\Omega} \leq c. 
			\end{equation}
			Our next two claims will deal with this. In order to use the results for our later development we will make the constant $c$ to be independent of $\tau$. Since the terms with $u$ are locked inside of the function $\te$, this constant will also be independent of $u$. 
			
			First notice that the equation \eqref{deltaequation} is equivalent to the system of equations, 
			\begin{align}
				-\mdiv\left(\sigma_s\nabla\Phi_s\right) + \tau\Phi_s &= -\delta A_s \ifae\left(u,\Phi_s-\Phi_e\right), \ \ \mbox{ on $\op$, } \label{deltaeqone} \\
				-\mdiv\left(\sigma_e\nabla\Phi_e\right) + \tau\Phi_e &= -\delta \mdiv\left(\sigma_e d_1\left(u_0-\te\left(u_0-u\right)\right)f\right) \nonumber \\
				& + \delta A_s \ifae\left(u,\Phi_s-\Phi_e\right)\chi_{\op}, \ \ \mbox{ on $\Omega$, } \label{deltaeqtwo} \\
				\sigma_s\nabla\Phi_s \cdot \mathbf{n} &= \delta I, \ \ \mbox{ on $\partial\op$, } \label{deltaeqthree} \\
				\sigma_e \nabla \Phi_e \cdot \mathbf{n} &=0, \ \ \mbox{ on $\partial\Omega$, } \label{deltaeqfour}
			\end{align}
			Then, we have the following two estimates, 
			\begin{clm}\label{potLtwobound}
				There is a constant $c$, which is independent of both $\tau$ and $u$, so that, 
				\begin{equation}
					\int_{\Omega} \left|\Phi_e\right|^2 dx + \int_{\op} \left|\Phi_s\right|^2 dx \leq c. \nonumber
				\end{equation}
			\end{clm}
			\begin{proof}
				To begin we first integrate \eqref{deltaeqone} over $\op$ and then integrate \eqref{deltaeqtwo} over $\Omega$, and then add together the two resulting equations to find,
				\begin{equation}\label{potaverageiszero}
					\int_{\Omega} \Phi_e dx + \int_{\op} \Phi_s dx = 0. 
				\end{equation}
				
				Then we use $\Phi_s$ as a test function in \eqref{deltaeqone} and $\Phi_e$ as a test function in \eqref{deltaeqtwo} and add together the two resulting equations to find, 
				\begin{align}\label{lto}
					\int_{\op} &\sigma_s\left|\nabla\Phi_s\right|^2 dx + \int_{\Omega} \sigma_e\left|\nabla\Phi_e\right|^2 dx + \tau\int_{\op} \Phi_s^2 dx + \tau\int_{\Omega} \Phi_e^2 dx \nonumber \\
					& + \delta A_s \int_{\op} \ifae\left(u,\Phi_s-\Phi_e\right)\left(\Phi_s-\Phi_e\right)dx \nonumber \\
					&= \delta\int_{\Omega} \sigma_e d_1\left(u_0-\te\left(u_0-u\right)\right) \mathbf{f}\cdot \nabla \Phi_e dx + \delta \int_{\partial\op} I \Phi_s dx. 
				\end{align}
				For the two integrals on the right-hand side we estimate, 
				\begin{align}\label{ltt}
					\delta\int_{\Omega} \sigma_e d_1\left(u_0-\te\left(u_0-u\right)\right) \mathbf{f}\cdot \nabla \Phi_e dx &\leq \delta c\left(\left\|u_0\right\|_{\infty,\Omega} + \varepsilon\right) \left\| f\right\|_{\infty,\Omega} \int_{\Omega} \left|\nabla \Phi_e\right| dx \nonumber \\
					& \leq \frac{1}{2} \int_{\Omega} \left|\nabla\Phi_e\right|^2 dx + \delta c\left(\left\|u_0\right\|_{\infty,\Omega} + \varepsilon\right)^2 \left\| f\right\|^2_{\infty,\Omega}
				\end{align}
				\begin{align}\label{ltth}
					\delta\int_{\partial\op} I \Phi_s dS &\leq \delta I_{max} \int_{\partial\op} \left|\Phi_s\right| dS 
				\end{align}
				Then, for the third integral on the left-hand side of equation \eqref{lto} we first note by the Mean Value Theorem that, 
				\begin{equation}
					\ifae\left(u,\Phi_s-\Phi_e\right) - \ifae\left(u,0\right) = \partial_{y_2} \ifae\left(u,\xi\right)\left(\Phi_s-\Phi_e\right). \nonumber 
				\end{equation}
				Then, from \eqref{ifaraepsilon} and (H4), we have, 
				\begin{align}\label{partialy2}
					\partial_{y_2}\ifae\left(u,\xi\right) &= \frac{g_1 \alpha}{u_0 - \te\left(u_0-u\right)} e^{\frac{\alpha\xi}{u_0-\te\left(u_0-u\right)} } e^{\frac{-\alpha U}{u_0-\te\left(u_0-u\right)} } \nonumber \\
					& + \frac{g_1 \alpha}{u_0 - \te\left(u_0-u\right)}e^{\frac{-\alpha\xi}{u_0-\te\left(u_0-u\right)} } e^{\frac{\alpha U}{u_0-\te\left(u_0-u\right)} }  \nonumber \\
					& \geq \frac{2 g_0 \alpha}{L_0 - \varepsilon} \nonumber \\
					& = c_0 \nonumber \\
					& > 0.
				\end{align}
				Subsequently we are able to estimate, 
				\begin{align}\label{ltf}
					\delta A_s& \int_{\op} \ifae\left(u,\Phi_s-\Phi_e\right) \left(\Phi_s-\Phi_e\right) dx \nonumber \\
					& = \delta A_s \int_{\op} \left[\ifae\left(u,\Phi_s-\Phi_e\right)-\ifae\left(u,0\right)\right]\left(\Phi_s-\Phi_e\right) dx \nonumber \\
					& + \delta A_s \int_{\op} \ifae\left(u,0\right)\left(\Phi_s-\Phi_e\right) dx \nonumber \\
					& = \delta A_s \int_{\op} \partial_{y_2} \ifae\left(u,\xi\right)\left(\Phi_s-\Phi_e\right)^2 dx + \delta A_s \int_{\op} \ifae\left(u,0\right)\left(\Phi_s-\Phi_e\right) dx \nonumber \\
					& \geq c_0 \delta A_s \int_{\op} \left(\Phi_s - \Phi_e\right)^2 dx - \frac{c_0 \delta A_s}{2} \int_{\op} \left(\Phi_s-\Phi_e\right)^2 dx - \delta c \int_{\op} \left|\ifae\left(u,0\right)\right|^2 dx 
				\end{align}
				Where in the last line we have also used Young's inequality. Upon substituting \eqref{ltt}, \eqref{ltth}, and \eqref{ltf} into inequality \eqref{lto} we obtain, 
				\begin{align}\label{ltfi}
					\int_{\op} &\sigma_s\left|\nabla\Phi_s\right|^2 dx + \int_{\Omega} \sigma_e\left|\nabla\Phi_e\right|^2 dx + \frac{\delta c_0 A_s}{2} \int_{\op} \left(\Phi_s-\Phi_e\right)^2 dx \nonumber \\
					& \leq \delta c \int_{\op} \left|\Phi_s\right| dx + \delta c \int_{\op} \left|\ifae\left(u,0\right)\right|^2 dx + \delta c \nonumber \\
					& \leq \delta c \int_{\op} \left|\Phi_s\right| dx + \delta c
				\end{align}
				From \eqref{ltfi} we are then able to derive, 
				\begin{equation}\label{ltdifference}
					\int_{\op} \left(\Phi_s-\Phi_e\right)^2 dx \leq c\int_{\op}\left|\Phi_s\right|dx + c. 
				\end{equation}
				Next, we note from \eqref{potaverageiszero} that, 
				\begin{align}\label{sos}
					\left(\int_{\Omega} \Phi_e dx \right)^2  + \left(\int_{\op} \Phi_s dx \right)^2 &= \frac{1}{2} \left( \left(\int_{\Omega} \left(\Phi_e  + \Phi_s\chi_{\op}\right) dx\right)^2 + \left(\int_{\Omega} \left(\Phi_e - \Phi_s\chi_{\op}\right) dx\right)^2 \right) \nonumber \\
					&= \frac{1}{2} \left(\int_{\op} \left(\Phi_e-\Phi_s\right) dx\right)^2 + \frac{1}{2}\left(\int_{\Omega\setminus\op} \Phi_e dx \right)^2. 
				\end{align}
				Then, using Poincare's inequality, \eqref{ltdifference}, and \eqref{sos} we have, 
				\begin{align}\label{PI}
					&\int_{\Omega} \Phi_e^2 dx + \int_{\op} \Phi_s^2 dx \nonumber \\
					& \leq 2 \int_{\Omega} \left(\Phi_e -\frac{1}{\left|\Omega\right|} \int_{\Omega} \Phi_e dx \right)^2 dx + \frac{2}{\left|\Omega\right|} \left(\int_{\Omega} \Phi_e dx \right)^2  \nonumber \\
					& + 2 \int_{\op} \left(\Phi_s -\frac{1}{\left|\op\right|} \int_{\op} \Phi_s dx \right)^2 dx + \frac{2}{\left|\op\right|} \left(\int_{\op} \Phi_s dx \right)^2 \nonumber \\
					& \leq c\int_{\Omega} \left|\nabla \Phi_e\right|^2 dx + c\int_{\op} \left|\nabla \Phi_s\right|^2 dx + \frac{2}{\left|\op\right|} \left( \left(\int_{\Omega} \Phi_e dx \right)^2  + \left(\int_{\op} \Phi_s dx \right)^2 \right) \nonumber \\
					& \leq c \int_{\op}\left|\Phi_s\right| dx + \frac{1}{\left|\op\right|} \left(\int_{\op} \left(\Phi_e-\Phi_s\right) dx \right)^2 + \frac{1}{\left|\op\right|} \left(\int_{\Omega\setminus\op} \Phi_e dx \right)^2 + c \nonumber \\
					& \leq c \int_{\op} \left|\Phi_s\right| dx + \frac{1}{\left|\op\right|} \left(\int_{\Omega\setminus\op} \Phi_e dx \right)^2 + c \nonumber \\
					& \leq \frac{1}{2} \int_{\op} \Phi^2_s dx + \frac{\left|\Omega_s\right|}{\left|\Omega_a\right|} \int_{\Omega} \Phi_e^2 dx + c.
				\end{align}
				Finally, due to (H7) the proof of the claim is complete. 
			\end{proof}
			With claim \ref{potLtwobound} out of the way we can now prove the following, 
			\begin{lemma}\label{potlinfbound}
				There is a constant $c$, which is independent of both $\tau$ and $u$, so that, 
				\begin{equation}
					\sup_{\Omega} \left|\Phi_e\right| + \sup_{\op} \left|\Phi_s\right| \leq c. \nonumber 
				\end{equation}
			\end{lemma}
			For the proof we will use a Moser style iteration argument. We are able to accomplish this despite the exponential nonlinearity by taking advantage of the fact that $\ifae$ is increasing in its second variable.  
			\begin{proof}
				We first let, 
				\begin{equation}
					b = \left\|\ifae\left(u,0\right)\right\|_{p, \op} + \left\|u_0\right\|_{\infty,\Omega} + I_{max} + \varepsilon. \nonumber 
				\end{equation}
				Then, for any $n\geq 1$, we use $\left(\Phi_s^{+} + b\right)^n $ as a test function in \eqref{deltaeqone} to arrive at, 
				\begin{align}\label{linfone}
					n &\int_{\op} \sigma_s \left(\Phi_s^{+} + b\right)^{n-1} \left|\nabla \Phi_s^+\right|^2 dx + \tau \int_{\op} \Phi_s \left(\Phi_s^{+} + b\right)^n dx \nonumber \\
					&= -\delta A_s \int_{\op} \ifae\left(u,\Phi_s-\Phi_e\right)\left(\Phi_s^{+} + b\right)^n dx + \delta\int_{\partial\op} I\left(\Phi_s^{+} + b\right)^n dS. 
				\end{align}
				Next we use $\left(\Phi_e^{+} + b\right)^n $ as a test function in \eqref{deltaeqtwo}, 
				\begin{align}\label{linftwo}
					n& \int_{\Omega} \sigma_e \left(\Phi_e^+ + b\right)^{n-1} \left|\nabla \Phi^{+}_e\right|^2 dx + \tau \int_{\Omega} \Phi_e \left(\Phi_e^+ + b \right)^n dx \nonumber \\
					& = \delta n \int_{\Omega} \sigma_e d_1\left(u_0 - \te\left(u_0 - u\right) \right) \left(\Phi_e^+ +b\right)^{n-1} f \cdot \nabla \Phi_e^{+} dx \nonumber \\
					& + \delta A_s \int_{\op} \ifae\left(u,\Phi_s-\Phi_e\right)\left(\Phi_e^+ + b\right)^n dx 
				\end{align}
				Next, we integrate \eqref{deltaeqone} over $\op$ to obtain, 
				\begin{equation}
					\tau \int_{\op} \Phi_s dx = - \delta A_s \int_{\op} \ifae\left(u,\Phi_s-\Phi_e\right) dx \nonumber 
				\end{equation}
				Upon separating $\Phi_s$ into its positive and negative portions we then find, 
				\begin{equation}\label{pshelp}
					\tau \int_{\op} \Phi_s^- dx = \tau \int_{\op} \Phi_s^+ dx + \delta A_s \int_{\op} \ifae\left(u,\Phi_s-\Phi_e\right) dx 
				\end{equation}
				Then using \eqref{pshelp} we calculate, 
				\begin{align}\label{pstau}
					\tau \int_{\op} \Phi_s\left(\Phi_s^+ + b\right)^n dx &= \tau \int_{\op} \Phi_s^+\left(\Phi_s^+ + b\right)^n dx - \tau \int_{\op} \Phi_s^- \left(\Phi_s^+ + b\right)^n dx \nonumber \\
					&=\tau \int_{\op} \Phi_s^+\left(\Phi_s^+ + b\right)^n dx - \tau b^n \int_{\op} \Phi_s^- dx \nonumber \\
					&= \tau \int_{\op} \Phi_s^+ \left[\left(\Phi_s^+ + b\right)^n - b^n\right] dx - \delta A_s b^n \int_{\op} \ifae\left(u,\Phi_s-\Phi_e\right) dx 
				\end{align}
				In a similar manner we also have, 
				\begin{equation}\label{petau}
					\tau \int_{\Omega} \Phi_e \left(\Phi_e^+ + b\right)^n dx = \tau \int_{\Omega} \Phi_e^+ \left[ \left(\Phi_e^+ + b\right)^n - b^n \right] dx + \delta A_s b^n \int_{\op} \ifae\left(u,\Phi_s-\Phi_e\right) dx 
				\end{equation}
				Then we substitute \eqref{pstau} into \eqref{linfone}, and \eqref{petau} into \eqref{linftwo}, and then add the two resulting equations together to derive, 
				\begin{align}\label{linfthree}
					n & \int_{\Omega} \sigma_e \left(\Phi_e^+ + b\right)^{n-1} \left|\nabla\Phi_e^+ \right|^2 dx + n \int_{\op} \sigma_s \left(\Phi_s^+ + b\right)^{n-1} \left|\nabla\Phi_s^+ \right|^2 dx \nonumber \\
					& \leq \delta d_1\int_{\Omega} \sigma_e\left(u_0-\te\left(u_0-u\right)\right) \left(\Phi_e^+ + b\right)^{n-1} {\mathbf{f}} \cdot \nabla \Phi_e^+ dx \nonumber \\
					& - \delta A_s \int_{\op} \ifae\left(u,\Phi_s-\Phi_e\right)\left[\left(\Phi_s^+ + b\right)^n - \left(\Phi_e^+ + b\right)^n\right] dx \nonumber \\
					& + \delta \int_{\partial\op} I \left(\Phi_s^+ + b\right)^n dS \nonumber \\
					& = I_1 + I_2 + I_3 
				\end{align}
				We now proceed to estimate each of $I_1$-$I_3$. For $I_1$ we have from (H5),
				\begin{align}\label{ione}
					I_1 & \leq cn\left(\left\|u_0\right\|_{\infty,\Omega} + \varepsilon\right)\int_{\Omega} \sigma_e \left(\Phi_e^+ + b\right)^{n-1}\left|\nabla \Phi_e^+ \right| dx \nonumber \\
					& \leq \frac{n}{2} \int_{\Omega} \sigma_e \left(\Phi_e^+ + b\right)^{n-1} \left|\nabla \Phi_e^+ \right|^2 dx \nonumber \\
					&+ cn\left(\left\|u_0\right\|_{\infty,\Omega} +\varepsilon\right)^2\int_{\Omega}\left(\Phi_e^+ + b\right)^{n-1} dx+
				\end{align}
				For $I_2$, we first use the fact that $\ifae(y_1,y_2)$ is increasing in its second variable to conclude that, 
				\begin{equation}
					G = \left[\ifae\left(u,\Phi_s-\Phi_e\right) - \ifae\left(u,0\right)\right]\left[\left(\Phi_s^+ + b\right)^n - \left(\Phi_e^+ + b\right)^n\right] \geq 0 \nonumber 
				\end{equation}
				Then, we have, 
				\begin{align}\label{itwo}
					I_2 &= -\delta A_s \int_{\op} G dx - \delta A_s \int_{\op} \ifae\left(u,0\right)\left[\left(\Phi_s^+ + b\right)^n - \left(\Phi_e^+ + b\right)^n \right] dx \nonumber \\
					& \leq - \delta  A_s \int_{\op} \ifae\left(u,0\right)\left[\left(\Phi_s^+ + b\right)^n - \left(\Phi_e^+ + b\right)^n \right] dx
				\end{align}
				Next, we use the Trace Theorem to estimate $I_3$, 
				\begin{align}\label{ithree}
					I_3 & \leq I_{max} \int_{\partial\op} \left(\Phi_s^+ + b\right)^n dS \nonumber \\
					& \leq c n I_{max} \int_{\op} \left(\Phi_s^+ + b\right)^{n-1} \left|\nabla \Phi_s^+ \right| dx + c I_{max} \int_{\op} \left(\Phi_s^+ + b\right)^n dx \nonumber \\
					& \leq \frac{n}{2} \int_{\op} \left(\Phi_s^+ + b\right)^{n-1} \left|\nabla\Phi_s^+ \right|^2 dx + c n I^2_{max} \int_{\op} \left(\Phi_s^+ + b\right)^{n-1} dx \nonumber \\
					& + c I_{max} \int_{\op} \left(\Phi_s^+ + b\right)^n dx 
				\end{align}
				We then substitute each of \eqref{ione}, \eqref{itwo}, and \eqref{ithree} into \eqref{linfthree} to derive, 
				\begin{align}\label{linffour}
					\frac{2n}{(n+1)^2} & \int_{\Omega} \sigma_e \left|\nabla\left(\Phi_e^+ + b \right)^{\frac{n+1}{2}}\right|^2 dx + \frac{2n}{(n+1)^2} \int_{\op} \sigma_s \left|\nabla\left(\Phi_s^+ + b \right)^{\frac{n+1}{2}}\right|^2 dx \nonumber \\
					& \leq cn\left(\left\|u_0\right\|_{\infty,\Omega} + \varepsilon\right)^2 \int_{\Omega} \left(\Phi_e^+ + b\right)^{n-1} dx \nonumber \\
					& + \delta  A_s \int_{\op} \ifae\left(u,0\right)\left[\left(\Phi_s^+ + b\right)^n - \left(\Phi_e^+ + b\right)^n \right] dx \nonumber \\
					& + cn I^2_{max} \int_{\op} \left(\Phi_s + b \right)^{n-1} dx + c I_{max} \int_{\op} \left(\Phi_s + b\right)^n dx 
				\end{align}
				Then, let 
				\begin{equation}
					W = \max\{\Phi_e^+, \Phi_s^+\chi_{\op}\} \nonumber 
				\end{equation}
				Upon using the Sobolev Embedding Theorem and \eqref{linffour}, we estimate, 
				\begin{align}\label{linffive}
					\left\|\left(W + b\right)^{n+1} \right\|_{\frac{N}{N-2},\Omega} & = \left( \int_{\Omega} \left( W + b\right)^{\frac{(n+1)}{2}\frac{2N}{N-2}} dx \right)^{\frac{N-2}{N}} \nonumber \\
					& \leq c \int_{\Omega} \left|\nabla \left(W+b\right)^{\frac{n+1}{2}}\right|^2 dx + c \int_{\Omega} \left(W+b\right)^{n+1} dx \nonumber \\
					& \leq c \int_{\Omega} \left|\nabla \left(\Phi_e^+ + b \right)^{\frac{n+1}{2}} \right|^2 dx + c \int_{\op} \left|\nabla\left(\Phi_s^+ + b\right)^{\frac{n+1}{2}} \right|^2 dx \nonumber \\
					& + c \int_{\Omega} \left( W + b\right)^{n+1} dx \nonumber \\
					& \leq c\left(n+1\right)^2\left( \left(\left\|u_0\right\|_{\infty,\Omega} + \varepsilon\right)^2 + I^2_{max}\right) \int_{\Omega} \left(W +b \right)^{n-1} dx \nonumber \\
					& + c (n+1)^2\int_{\op} \left|\ifae\left(u,0\right)\right|\left(W + b\right)^n dx \nonumber \\
					&+ c(n+1)^2 I_{max} \int_{\Omega} \left(W + b\right)^n dx + c \int_{\Omega} \left(W + b\right)^{n+1} dx \nonumber \\
					& \leq c(n+1)^2 \left( \left\|u_0\right\|_{infty,\Omega} + I_{max} + \varepsilon\right)^2 \left\|\left(W + b\right)^{n-1} \right\|_{\frac{p}{p-1},\Omega} \nonumber \\
					& + c(n+1)^2 \left( \left\|\ifae\left(u,0\right)\right\|_{p,\op} + I_{max} \right) \left\|\left(W + b\right)^{n} \right\|_{\frac{p}{p-1},\Omega} \nonumber \\
					& + c \left\|\left(W + b\right)^{n+1} \right\|_{\frac{p}{p-1},\Omega} \nonumber \\
					& \leq c(n+1)^2 \left\|\left(W + b\right)^{n+1} \right\|_{\frac{p}{p-1},\Omega} 
				\end{align}
				We then choose $p$ to be large enough that, 
				\begin{equation}
					\ell = \frac{N}{N-2}\frac{p-1}{p} > 1. \nonumber  
				\end{equation}
				Then, for each $i=2,3,...$ we put, 
				\begin{equation}
					n+1 = \ell^i. \nonumber 
				\end{equation}
				We take the $(n+1)^{st}$ root of each side of \eqref{linffive} to obtain, 
				\begin{align}
					\left\|W + b\right\|_{\frac{p \ell^{i+1}}{p-1}, \Omega} & \leq c^{\frac{1}{(n+1)}} \left(n+1\right)^{\frac{2}{n+1}}  \left\| W + b\right\|_{\frac{p\ell^i}{p-1}, \Omega} \nonumber \\
					& = c^{\frac{1}{\ell^i} } \ell^{\frac{2}{\ell^i} } \left\| W + b\right\|_{\frac{p\ell^i}{p-1},\Omega} \nonumber 
				\end{align}
				Iterating on $i$ then yields, 
				\begin{equation}
					\left\| W + b \right\|_{\frac{p\ell^{i+1}}{p-1}, \Omega} \leq c^{\sum_{j=1}^{i} \frac{1}{\ell^j} } \ell^{2\sum_{j=1}^{i} \frac{1}{\ell^{j}} } \left\| W + b \right\|_{\frac{p\ell}{p-1},\Omega} \nonumber 
				\end{equation}
				Then we let $i\rightarrow \infty$ and use the interpolation inequality to obtain, 
				\begin{align}
					\left\| W + b \right\|_{\infty,\Omega} &\leq c \left\| W + b \right\|_{\frac{p\ell}{p-1}, \Omega} \nonumber \\
					& \leq \sigma \left\| W + b \right\|_{\infty,\Omega} + c\left(\sigma\right) \left\| W + b\right\|_{1,\Omega}. \nonumber 
				\end{align}
				Upon choosing $\sigma$ to be sufficiently small we then have by using lemma \ref{potLtwobound}, 
				\begin{align}
					\left\| W \right\|_{\infty,\Omega} & \leq \left\| W + b \right\|_{\infty,\Omega} \nonumber \\
					& \leq c \left\| W + b \right\|_{1, \Omega} \nonumber \\
					& \leq c \left\| W \right\|_{1,\Omega} + cb \nonumber \\
					& \leq c \left\|\Phi_e\right\|_{2,\Omega} + c \left\| \Phi_s \right\|_{2,\op} + c \nonumber \\
					& \leq c. \nonumber 
				\end{align}
				Subsequently, we can conclude that, 
				\begin{equation}
					\sup_{\Omega} \Phi_e^+ + \sup_{\op} \Phi_s^+ \leq c \nonumber 
				\end{equation}
				By applying the same method to the negative parts of $\Phi_e$ and $\Phi_s$ we can then conclude the proof. 
			\end{proof}
			Finally, having proven \ref{LScond}, we can conclude by the Leray-Schauder Theorem that $\mathbb{B}$ has a fixed point. Clearly this fixed point is the unique solution to \eqref{psapprox}-\eqref{peapproxboundary}. The proof is complete. 
		\end{proof}
		
		Next, we will justify passing to the limit in equations, \eqref{psapprox}-\eqref{peapproxboundary}. This will yield the following lemma,  
		\begin{lemma}\label{potexists}
			If $u \in C\left(\overline{\ot}\right)$, then for any $t\in\left[0,T\right]$ there is a weak solution, $\left(\Phi_s, \Phi_e\right) \in \mathbb{X}_1 \cap \mathbb{Y}_1$ to the problem, 
			\begin{align}
				-\mdiv\left(\sigma_s\nabla\Phi_s\right)  &= -A_s\ifae\left(u, \Phi_s-\Phi_e\right), \ \ \mbox{ in $\op$, } \label{ps} \\
				-\mdiv\left(\sigma_e\nabla \Phi_e\right)  &= -\mdiv\left(d_1\sigma_e \left( u_0 - \te\left(u_0-u\right)\right) \mathbf{f} \right) + A_s \ifae\left( u, \Phi_e - \Phi_s\right)\chi_{\op}, \ \  \mbox{ in $\Omega$, } \label{pe} \\ 
				\sigma_s \nabla \Phi_s \cdot \mathbf{n} &= I, \ \ \mbox{ on $\partial\op$, } \label{psboundary} \\
				\sigma_e \nabla \Phi_e \cdot \mathbf{n} &= 0 , \ \ \mbox{ on $\partial\Omega$. } \label{peboundary} 
			\end{align}
			In addition we have, 
			\begin{equation}\label{potsumiszero}
				\int_{\Omega} \Phi_e dx + \int_{\op} \Phi_s dx = 0. 
			\end{equation}
		\end{lemma}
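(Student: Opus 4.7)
The plan is to obtain $(\Phi_s,\Phi_e)$ by sending $\tau\to 0^+$ in the family of solutions $(\pst,\pet)$ to the regularized system \eqref{psapprox}--\eqref{peapproxboundary} produced by Proposition \ref{potapprox}, at each fixed $t\in[0,T]$. First I collect uniform-in-$\tau$ estimates. Lemma \ref{potlinfbound} applied with $\delta=1$ gives a $\tau$-independent bound
$$\|\pst\|_{\infty,\op}+\|\pet\|_{\infty,\Omega}\le c.$$
Using $\pst$ and $\pet$ as test functions in \eqref{psapprox}--\eqref{peapproxboundary} and adding (exactly as in \eqref{lto}), then absorbing the gradient of $\pet$ and the boundary integral of $I\pst$ on the right with Young's inequality and the trace theorem together with the $L^\infty$ bounds, yields a uniform $W^{1,2}$ estimate on $\pst$ and $\pet$. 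Independently, integrating \eqref{psapprox} over $\op$ and \eqref{peapprox} over $\Omega$, the divergence terms collapse by the boundary conditions \eqref{psapproxboundary}--\eqref{peapproxboundary}, assumption (H5) (so that $\mathbf{f}\cdot\mathbf{n}=0$ on $\partial\Omega$), and the compatibility condition (H6); adding the two resulting identities produces $\tau\bigl(\int_\Omega \pet\,dx+\int_{\op}\pst\,dx\bigr)=0$, and since $\tau>0$ the normalization $\int_\Omega \pet\,dx+\int_{\op}\pst\,dx=0$ holds for every $\tau$.

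Next I pass to the limit. By Rellich's theorem and the uniform $W^{1,2}$ bound, I can extract a subsequence (not relabeled) so that $\pst\rightharpoonup \Phi_s$ weakly in $W^{1,2}(\op)$ and strongly in $L^2(\op)$ (with a.e.\ convergence), and similarly $\pet\rightharpoonup \Phi_e$. The a.e.\ convergence together with the uniform $L^\infty$ bounds and the continuity of $\ifae$ in its second argument allow the dominated convergence theorem to give
$$\ifae(u,\pst-\pet)\longrightarrow \ifae(u,\Phi_s-\Phi_e) \quad \text{in } L^p \text{ for every } p<\infty.$$
The zeroth-order terms $\tau\pst$ and $\tau\pet$ vanish in $L^\infty$ because of the uniform sup norm bound. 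Passing to the limit in the weak formulation of \eqref{psapprox}--\eqref{peapproxboundary} then produces the weak form of \eqref{ps}--\eqref{peboundary}, and \eqref{potsumiszero} is inherited directly from the $\tau$-level normalization.

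Finally, to confirm $(\Phi_s,\Phi_e)\in\mathbb{X}_1\cap\mathbb{Y}_1$, I observe that the right-hand sides of \eqref{ps}--\eqref{pe} and the boundary flux $I$ all lie in $L^\infty$, thanks to the bounds on $\Phi_s$, $\Phi_e$, $u$, $U$, $g_1$, and $\mathbf{f}$ together with (H1)--(H6). Lemma \ref{lpelliptic} then gives $\Phi_s\in W^{1,p}(\op)$ and $\Phi_e\in W^{1,p}(\Omega)$ for every $p<\infty$, and Morrey's embedding upgrades this to the H\"older regularity demanded by $\mathbb{X}_1$. I expect the main obstacle to be the identification of the limit in the exponential Butler--Volmer nonlinearity, but this is neutralized once the uniform $L^\infty$ bound is in hand: the nonlinearity is then uniformly bounded, and Rellich plus dominated convergence close the argument with no need for further estimates on the exponential itself.
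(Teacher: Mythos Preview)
Your proposal is correct and follows essentially the same route as the paper: obtain uniform-in-$\tau$ $L^\infty$ and $W^{1,2}$ bounds from Lemma \ref{potlinfbound} and an energy identity, extract a weakly convergent subsequence, use the uniform boundedness of $\ifae$ to pass the nonlinear term to the limit, and let the $\tau$-perturbation disappear. The only notable addition is your final paragraph verifying the $C^{0,\beta}$ regularity via Lemma \ref{lpelliptic} and Morrey's embedding, which the paper's proof leaves implicit; this is a harmless (indeed helpful) elaboration rather than a different approach.
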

		
		\begin{proof}
			For each $\tau > 0$, we let $\left(\pst,\pet\right)$ be the solutions given by \ref{potapprox} to the Boundary-Value problem, 
			\begin{align}
				-\mdiv\left(\sigma_s\nabla \pst \right) + \tau \pst &= -A_s \ifae\left(u,\pst-\pet\right), \ \mbox{ in $\op$, } \label{tone} \\
				-\mdiv\left(\sigma_e \nabla \pet\right) + \tau \pet &= -\mdiv\left(\sigma_e d_1 \left(u_0-\te\left(u_0-u\right)\right) \mathbf{f} \right) \nonumber \\
				& + A_s \ifae\left(u,\pst-\pet\right)\chi_{\op}, \ \mbox{ in $\Omega$, } \label{ttwo} \\
				\sigma_s\nabla \pst \cdot \mathbf{n} &= I, \ \mbox{ on $\partial\op$, } \label{tthree} \\
				\sigma_e \nabla \pet \cdot \mathbf{n} &= 0, \ \mbox{ on $\partial\Omega$. } \label{tfour} 
			\end{align}
			Then, on account of lemma \ref{potlinfbound}, there is a constant which is independent of $\tau$ so that, 
			\begin{equation}
				\sup_{\Omega} \left|\pet\right| + \sup_{\op} \left| \pst \right| \leq c \nonumber 
			\end{equation}
			Subsequently, we can conclude that $\ifae\left(u, \pst-\pet\right)$ is uniformly bounded in $L^{\infty}\left(\op\right)$, independently from $\tau$. Using $\pst$ as a test function in \eqref{tone} and $\pet$ as a test function in \eqref{ttwo} and adding together the two resulting equations we then derive, 
			\begin{equation}
				\int_{\Omega} \left|\nabla \pet \right|^2 dx + \int_{\op} \left|\nabla \pst \right|^2 dx \leq c \nonumber 
			\end{equation}
			As a consequence we may pass to a subsequence, which we won't relabel, so that, 
			\begin{align}
				\pet &\rightarrow \Phi_e, \ \mbox{ weakly in $W^{1,2}\left(\Omega\right)$, and strongly in $L^p\left(\Omega\right)$ for all $1\leq p < \infty$, } \nonumber \\
				\pst &\rightarrow \Phi_s, \ \mbox{ weakly in $W^{1,2}\left(\op\right)$, and strongly in $L^p\left(\op\right)$ for all $1\leq p < \infty$. } \nonumber 
			\end{align}
			Then, since the function $\ifae\left(u,\pst-\pet\right)$ is uniformly bounded we can also conclude that, 
			\begin{equation}
				\ifae\left(u,\pst-\pet\right) \rightarrow \ifae\left(u,\Phi_s-\Phi_e\right) \ \mbox{ strongly in $L^p\left(\op\right)$ for all $1\leq p < \infty$. } \nonumber 
			\end{equation}
			We are now able to pass to the limit in equation \eqref{tone}-\eqref{tfour}. The proof is complete. 
		\end{proof}

		\section{Proof of the Main Theorem}
		
		We are now ready to prove our Main Theorem, \ref{MainTheorem}. To do so we will use a fixed-point mapping as before. 
		
		Let $L_0$ and $\varepsilon$ be given as in \eqref{epsiloncondition}, and $\ifae\left(u,\Phi_s-\Phi_e\right)$ as in \eqref{ifaraepsilon}. Then, using $\ifae$ we define,
		\begin{align}
			\qe\left(u, \Phi_s-\Phi_e, \nabla \Phi_s, \nabla \Phi_e\right) &= \left(A_s\ifae\left(u,\Phi_s-\Phi_e\right)(\Phi_s - \Phi_e)+\sigma_s|\nabla\Phi_s|^2\right)\chi_{\op}\nonumber\\
			&+\sigma_e|\nabla\Phi_e|^2+d_1\sigma_e \left(u_0-\te\left(u_0-u\right)\right) \bf{f}\cdot\nabla\Phi_e. \nonumber 
		\end{align}
		
		Then, for any $T>0$ we consider the problem, 
		\begin{align}
			\left(C_p\rho\right)\partial_t u -\mdiv\left(k\nabla u\right) &= \qe\left(u,\Phi_s-\Phi_e,\nabla\Phi_s,\nabla\Phi_e\right), \ \mbox{ in $\ot$, } \label{mainone} \\
			-\mdiv\left(\sigma_s\nabla\Phi_s\right) &= -A_s\ifae\left(u,\Phi_s-\Phi_e\right), \ \mbox{ in $\op_T$ } \label{maintwo} \\
			\mdiv\left(\sigma_e\nabla\Phi_e\right) &= - \mdiv\left(\sigma_e d_1\left(u_0-\te\left(u_0-u\right)\right){\mathbf{f}}\right) + A_s \ifae\left(u,\Phi_s-\Phi_e\right), \ \mbox{ in $\ot$ } \label{mainthree} 
		\end{align}
		
		We first prove the existence of solutions to \eqref{mainone}-\eqref{mainthree}. To do so we define a mapping $\mathbb{J}$ taking $C\left(\overline{\ot}\right)$ into itself in the following manner; given $v\in C\left(\ot\right)$ we first let $\left(\Phi_s,\Phi_e\right) \in \mathbb{X}_1 \cap \mathbb{Y}_1$ be the unique solutions given by lemma \ref{potexists} to the problem, 
		\begin{align}
			-\mdiv\left(\sigma_s\nabla\Phi_s\right)  &= -A_s\ifae\left(v, \Phi_s-\Phi_e\right), \ \ \mbox{ in $\op$, } \label{mtone} \\
			-\mdiv\left(\sigma_e\nabla \Phi_e\right)  &= -\mdiv\left(d_1\sigma_e \left( u_0 - \te\left(u_0-v\right)\right) \mathbf{f} \right) + A_s \ifae\left( v, \Phi_e - \Phi_s\right)\chi_{\op}, \ \  \mbox{ in $\Omega$, } \label{mttwo} \\ 
			\sigma_s \nabla \Phi_s \cdot \mathbf{n} &= I, \ \ \mbox{ on $\partial\op$, } \label{mtthree} \\
			\sigma_e \nabla \Phi_e \cdot \mathbf{n} &= 0 , \ \ \mbox{ on $\partial\Omega$. } \label{mtfour} 
		\end{align}
		By lemma \ref{potlinfbound} we can conclude that, 
		\begin{equation}
			\sup_{\Omega} \left|\Phi_e\right| + \sup_{\op} \left|\Phi_s\right| \leq c \nonumber 
		\end{equation}
		Where the constant $c$ depends not on $v$ but on the norm of $u_0$ and $\varepsilon$. Consequently we may conclude that $\ifae\left(v,\Phi_s-\Phi_e\right)$ is bounded in $L^{\infty}\left(\Omega\right)$. Let $p> N+2$. By lemma \ref{lpelliptic} we may then conclude that there is a constant c so that, 
		\begin{equation}
			\left\|\Phi_e\right\|_{W^{1,p}\left(\Omega\right)} + \left\|\Phi_s\right\|_{W^{1,p}\left(\op\right)} \leq c \nonumber 
		\end{equation}
		Raising each term in the above equation to the $p$th power, and integrating with respect to $t$ we then obtain, 
		\begin{equation}
			\int_{\ot} \left|\nabla \Phi_e\right|^p dx dt + \int_{\op_T} \left|\nabla\Phi_s\right|^p dxdt \leq cT. \nonumber 
		\end{equation}
		Consequently we can then conclude that, 
		\begin{equation}
			\qe\left(v,\Phi_s-\Phi_e, \nabla\Phi_s,\nabla\Phi_e\right) \in L^{\frac{p}{2}}\left(\ot\right) \nonumber 
		\end{equation}
		Next, we form the problem, 
		\begin{align}
			\left(\rho C_p\right)\partial_t u - \mdiv\left(k\nabla u \right) &= \qe\left(v,\Phi_s-\Phi_e, \nabla\Phi_s,\nabla\Phi_e\right), \ \mbox{ in $\ot$, } \label{uone} \\
			k\nabla u \cdot \mathbf{n} &= k_1(u_0 - \te\left(u_0 -v\right)-T_a), \ \mbox{on $\Sigma_T$,} \label{utwo} \\
			u(x,0) &= u_0(x), \ \mbox{ on $\Omega$ } \label{uthree}
		\end{align}
		Since, $\frac{p}{2} > \frac{N}{2} + 1$, we can conclude from the classical existence and regularity theory for linear parabolic equations (\cite{LSU}) that there is a unique $ u \in C^{\alpha,\frac{\alpha}{2}}\left(\overline{\ot}\right) \cap L^{\infty}\left(0,T;W^{1,2}\left(\Omega\right)\right)$ solving \eqref{uone}-\eqref{uthree}. We then set $\mathbb{J}\left(v\right) = u$. As solutions to \eqref{uone}-\eqref{uthree} are unique, we can conclude that $\mathbb{J}$ is well-defined. Since the space $C^{\alpha,\frac{\alpha}{2}}\left(\overline{\ot}\right)$ is compactly embedded in $C\left(\overline{\ot}\right)$ we can also conclude that $\mathbb{J}$ must map bounded sets into precompact ones. 
		
		\begin{lemma}\label{jcont}
			The mapping $\mathbb{J}$ is continuous on $C\left(\overline{\ot}\right)$. 
		\end{lemma}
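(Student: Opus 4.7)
The plan is to take any sequence $v_n\to v$ in $C(\overline{\ot})$, set $u_n=\mathbb{J}(v_n)$ with corresponding potentials $(\Phi_s^{(n)},\Phi_e^{(n)})$ supplied by Lemma \ref{potexists} applied to $v_n$, and show $u_n\to \mathbb{J}(v)$ uniformly on $\overline{\ot}$. First I would collect uniform bounds: Lemma \ref{potlinfbound} gives a uniform $L^\infty$ bound on $(\Phi_s^{(n)},\Phi_e^{(n)})$, so $\ifae(v_n,\Phi_s^{(n)}-\Phi_e^{(n)})$ is uniformly bounded in $L^\infty$. Applying Lemma \ref{lpelliptic} pointwise in $t$ and integrating over $[0,T]$ then yields a uniform bound on $(\Phi_s^{(n)},\Phi_e^{(n)})$ in $L^p(0,T;W^{1,p})$ for any fixed $p>N+2$.

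The second step is to upgrade these to strong $L^2(0,T;W^{1,2})$ convergence. Writing $(\Phi_s,\Phi_e)$ for the potentials associated with $v$, I subtract the elliptic problems, test with the differences $\Phi_s^{(n)}-\Phi_s$ and $\Phi_e^{(n)}-\Phi_e$, and add. I split the Butler--Volmer contribution as
\begin{align}
&\ifae(v_n,\Phi_s^{(n)}-\Phi_e^{(n)})-\ifae(v,\Phi_s-\Phi_e)\nonumber\\
&\quad=\left[\ifae(v_n,\Phi_s^{(n)}-\Phi_e^{(n)})-\ifae(v_n,\Phi_s-\Phi_e)\right]+\left[\ifae(v_n,\Phi_s-\Phi_e)-\ifae(v,\Phi_s-\Phi_e)\right].\nonumber
\end{align}
The monotonicity computation \eqref{partialy2} forces the first bracket to produce a non-negative integrand that can be discarded; the second bracket, together with the advective difference $\te(u_0-v_n)-\te(u_0-v)$, tends to $0$ uniformly on $\overline{\ot}$ since $v_n\to v$ in $C(\overline{\ot})$ and the exponential nonlinearities are Lipschitz on the bounded ranges produced by \eqref{epsiloncondition} and the $L^\infty$ bound. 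This gives $\nabla\Phi_s^{(n)}\to\nabla\Phi_s$ and $\nabla\Phi_e^{(n)}\to\nabla\Phi_e$ strongly in $L^2$ on their respective cylinders.

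The third step is to propagate this convergence into $\qe$. Interpolating the strong $L^2$ convergence of the gradients against the uniform $L^p$ bound yields strong convergence in $L^q$ for every $2\leq q<p$; fixing such a $q$ with $q>N+2$, the squared terms $|\nabla\Phi_s^{(n)}|^2,|\nabla\Phi_e^{(n)}|^2$ converge in $L^{q/2}$ with exponent $q/2>N/2+1$. Combined with the uniform convergence of the bounded factors $\ifae(v_n,\Phi_s^{(n)}-\Phi_e^{(n)})(\Phi_s^{(n)}-\Phi_e^{(n)})$ and $u_0-\te(u_0-v_n)$, this shows $\qe(v_n,\Phi_s^{(n)}-\Phi_e^{(n)},\nabla\Phi_s^{(n)},\nabla\Phi_e^{(n)})\to\qe(v,\Phi_s-\Phi_e,\nabla\Phi_s,\nabla\Phi_e)$ strongly in $L^{q/2}(\ot)$.

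Finally, I subtract the parabolic problems \eqref{uone}--\eqref{uthree} for $u_n$ and $u:=\mathbb{J}(v)$ and apply Lemma \ref{linfparabolic} with exponent $q/2>N/2+1$: the source converges to $0$ in $L^{q/2}$ by the previous step, the Robin data $k_1[\te(u_0-v)-\te(u_0-v_n)]$ converges to $0$ uniformly on $\Sigma_T$, and the initial data cancels, so $\sup_{\ot}|u_n-u|\to 0$. The main obstacle is precisely the quadratic dependence of $\qe$ on $\nabla\Phi_s,\nabla\Phi_e$: weak $W^{1,2}$ convergence of the potentials, which is automatic from the uniform bounds, cannot pass through the squared gradient terms. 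The fix is to combine the monotonicity of $\ifae$ in its second variable with the higher integrability supplied by Lemma \ref{lpelliptic}, so that interpolation produces strong convergence in a Lebesgue exponent above the parabolic $L^\infty$ threshold of Lemma \ref{linfparabolic}.
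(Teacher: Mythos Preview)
Your proposal is correct and follows essentially the same route as the paper: uniform bounds from Lemmas~\ref{potlinfbound} and \ref{lpelliptic}, then strong $L^2(0,T;W^{1,2})$ convergence of the potentials via the monotonicity of $\ifae$ in its second variable combined with Lipschitz dependence on the first, interpolation against the $L^p$ gradient bound to push $\qe$ through in an exponent above $N/2+1$, and finally convergence of $u_n$. The only tactical differences are that the paper shows the potential sequences are Cauchy by comparing two indices $n_1,n_2$ (you compare $\Phi^{(n)}$ directly to the limiting potentials for $v$) and concludes via a compactness--subsequence--uniqueness argument rather than applying Lemma~\ref{linfparabolic} directly to $u_n-u$.
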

		\begin{proof}
			Let $\{v_n\}$ be a sequence in $C\left(\overline{\ot}\right)$ and $v \in C\left(\overline{\ot}\right)$ be such that, 
			\begin{equation}\label{vconv}
				v_n \rightarrow v, \ \mbox{ uniformly on $\overline{\ot}$ } \nonumber 
			\end{equation}
			Then for each $n=1,2,3,...$ we first let, $\left(\psn,\pen\right)$ be the solutions to the problem,
			\begin{align}
				-\mdiv\left(\sigma_s\nabla\psn\right)  &= -A_s\ifae\left(v_n, \psn-\pen\right), \ \ \mbox{ in $\op$, } \label{mtfive} \\
				-\mdiv\left(\sigma_e\nabla \pen\right)  &= -\mdiv\left(d_1\sigma_e \left( u_0 - \te\left(u_0-v_n\right)\right) \mathbf{f} \right) + A_s \ifae\left( v_n, \psn - \pen\right)\chi_{\op}, \ \  \mbox{ in $\Omega$, } \label{mtsix} \\ 
				\sigma_s \nabla \psn \cdot \mathbf{n} &= I, \ \ \mbox{ on $\partial\op$, } \label{mtseven} \\
				\sigma_e \nabla \pen \cdot \mathbf{n} &= 0 , \ \ \mbox{ on $\partial\Omega$. } \label{mteight} 
			\end{align}
			As before, we can conclude that the sequences $\{\psn\}$ and $\{\pen\}$ satisfy, 
			\begin{align}
				\sup_{\ot} \left|\pen\right| &+ \sup_{\op_T} \left|\psn\right|  \leq cT \label{ptmtsup} \\
				\int_{\ot} \left|\nabla\pen\right|^p dxdt &+ \int_{\op_T} \left|\nabla\psn\right|^p dxdt \leq cT \label{ptmtlp} 
			\end{align}
			Where the constant $c$ is independent of $n$. Therefore we can conclude that $\{\qe\left(v_n,\psn-\pen,\nabla\psn,\nabla\pen\right)\}$ is uniformly bounded in $L^{\frac{p}{2}}\left(\ot\right)$. We then let $u_n$ be the solution to, 
			\begin{align}
				\left(\rho C_p\right)\partial_t u_n - \mdiv\left(k\nabla u_n \right) &= \qe\left(v_n,\psn-\pen, \nabla\psn,\nabla\pen\right), \ \mbox{ in $\ot$, } \label{unine} \\
				k\nabla u_n \cdot \mathbf{n} &= k_1(u_0 - \te\left(u_0 -v_n\right)-T_a), \ \mbox{on $\Sigma_T$,} \label{uten} \\
				u_n(x,0) &= u_0(x), \ \mbox{ on $\Omega$ } \label{ueleven}
			\end{align}
			Clearly, $\mathbb{J}\left(v_n\right) = u_n$ for each $n=1,2,3,...$. Then, since $\{\qe\left(v_n,\psn-\pen,\nabla\psn,\nabla\pen\right)\}$ is uniformly bounded in $L^{\frac{p}{2}}\left(\ot\right)$ we can conclude that the sequence $\{u_n\}$ is uniformly bounded in $C^{\alpha,\frac{\alpha}{2}}\left(\overline{\ot}\right) \cap L^{\infty}\left(0,T;W^{1,2}\left(\Omega\right)\right)$. Therefore, we may pass to subsequences, which we won't relabel, so that, 
			\begin{align}
				u_n &\rightarrow u, \mbox{ uniformly on $\overline{\ot}$, and weakly in $L^2\left(0,T;W^{1,2}\left(\ot\right)\right)$, } \nonumber \\
				\psn &\rightarrow \Phi_s, \mbox{ weakly in $\L^p\left(0,T;W^{1,p}\left(\op\right)\right)$, } \nonumber \\
				\pen &\rightarrow \Phi_e, \mbox{ weakly in $\L^p\left(0,T;W^{1,p}\left(\Omega\right)\right)$ } \nonumber 
			\end{align}
			In order to have any convergence for the nonlinear terms $\ifae\left(v_n,\psn-\pen\right)$ and $\qe$ we need to upgrade the convergence of the sequence $\{\left(\psn,\pen\right)\}$ from weak to strong convergence. The next claim deals with this, 
			\begin{clm}\label{strongconv}
				The sequence $\{\psn\}$ is precompact in $L^2\left(0,T; W^{1,2}\left(\op\right)\right)$, and $\{\pen\}$ is precompact in $L^2\left(0,T; W^{1,2}\left(\op\right)\right)$.
			\end{clm}
			\begin{proof}
				We first use \eqref{mtfive} and \eqref{mtsix} to derive, 
				\begin{align}
					- \mdiv\left(\sigma_s \nabla\left(\Phi_s^{(n_1)}-\Phi_s^{(n_2)}\right)\right) &= -A_s \ifae\left(v_{n_1},\Phi_s^{(n_1)} - \Phi_e^{(n_1)}\right) \nonumber \\
					& + A_s \ifae\left(v_{n_2},\Phi_s^{(n_2)}-\Phi_s^{(n_2)}\right) \ \mbox{ in $\op_T$, } \label{cone}
				\end{align}
				and, 
				\begin{align}
					-\mdiv\left(\sigma_e\nabla\left(\Phi_e^{(n_1)} - \Phi_e^{(n_2)}\right)\right) &= A_s \ifae\left(v_{n_1},\Phi_s^{(n_1)} - \Phi_e^{(n_1)}\right)\chi_{\op} \nonumber \\ 
					& - A_s \ifae\left(v_{n_2},\Phi_s^{(n_2)}-\Phi_s^{(n_2)}\right)\chi_{\op} \ \mbox{ in $\Omega_T$, } \label{ctwo}
				\end{align}
				
				Using $\left(\Phi_s^{(n_1)}-\Phi_s^{(n_2)}\right)$ as a test function in \eqref{cone} and $\left(\Phi_e^{(n_1)}-\Phi_e^{(n_2)}\right)$ as a test function in \eqref{ctwo} and adding the resulting two equations together then yields, 
				\begin{align}\label{cmtone}
					& \int_{\Omega} \sigma_e \left|\nabla\left(\Phi_e^{(n_1)}-\Phi_e^{(n_2)}\right)\right|^2 dx + \int_{\op} \sigma_s\left|\nabla\left(\Phi_s^{(n_1)}-\Phi_s^{(n_2)}\right)\right|^2 dx \nonumber \\
					& + A_s \int_{\op} G dx = 0
				\end{align}
				Where,
				\begin{equation}
					G = \left[\ifae\left(v_{n_1},\Phi_s^{(n_1)}-\Phi_e^{(n_2)}\right) -\ifae\left(v_{n_2},\Phi_s^{(n_2)}-\Phi_e^{(n_2)}\right)\right]\left[\left(\Phi_s^{(n_1)}-\Phi_e^{(n_1)}\right)-\left(\Phi_s^{(n_2)}-\Phi_s^{(n_2)}\right)\right] \nonumber
				\end{equation}
				We then use the Mean value theorem to find, 
				\begin{align}
					\ifae&\left(v_{n_1},\Phi_s^{(n_1)}-\Phi_e^{(n_1)}\right) - \ifae\left(v_{n_2},\Phi_s^{(n_2)}-\Phi_e^{(n_2)}\right) \nonumber \\
					&= \ifae\left(v_{n_1},\Phi_s^{(n_1)}-\Phi_e^{(n_1)}\right) - \ifae\left(v_{n_2},\Phi_s^{(n_1)}-\Phi_e^{(n_1)}\right) \nonumber \\
					&+ \ifae\left(v_{n_2},\Phi_s^{(n_1)}-\Phi_e^{(n_1)}\right) - \ifae\left(v_{n_2},\Phi_s^{(n_2)}-\Phi_e^{(n_2)}\right) \nonumber \\
					& = \partial_{y_1}\ifae\left(\xi,\Phi_s^{(n_1)}-\Phi_e^{(n_1)}\right) \left(v_{n_1}-v_{n_2}\right) \nonumber \\
					& + \partial_{y_2} \ifae\left(v_{n_2},\eta\right)\left[\left(\Phi_s^{(n_1)}-\Phi_e^{(n_1)}\right)-\left(\Phi_s^{(n_2)}-\Phi_e^{(n_2)}\right)\right] \nonumber 
				\end{align}
				Using this in $G$ and substituting into \eqref{cmtone}, we derive from \eqref{ptmtsup} and \eqref{partialy2} that, 
				\begin{align}
					\int_{\Omega} & \sigma_e \left|\nabla\left(\Phi_e^{(n_1)}-\Phi_e^{(n_2)}\right)\right|^2 dx + \int_{\op} \sigma_s \left|\nabla\left(\Phi_s^{(n_1)}-\Phi_s^{(n_2)}\right)\right|^2 dx \nonumber \\
					& + A_s c_0 \int_{\op} \left[\left(\Phi_s^{(n_1)}-\Phi_e^{(n_1)}\right)-\left(\Phi_s^{(n_2)}-\Phi_e^{(n_2)}\right)\right]^2 dx \nonumber \\
					& \leq c \left\| v_{n_1} - v_{n_2} \right\|_{\infty,\Omega} \nonumber 
				\end{align}
				In light of \eqref{potsumiszero} we have, 
				\begin{equation}
					\int_{\Omega} \left( \Phi_e^{(n_1)}-\Phi_e^{(n_2)} \right) dx + \int_{\op} \left( \Phi_s^{(n_1)} -\Phi_s^{(n_2)} \right) dx = 0 \nonumber 
				\end{equation}
				We are now in a position to repeat the argument in \eqref{PI}, from which we derive, 
				\begin{align}
					& \int_{\Omega} \left[\Phi_e^{(n_1)}-\Phi_e^{(n_2)}\right]^2 dx + \int_{\op} \left[\Phi_s^{(n_1)}-\Phi_s^{(n_2)}\right]^2 dx \nonumber \\
					& + \int_{\Omega} \sigma_e \left|\nabla\left(\Phi_e^{(n_1)}-\Phi_e^{(n_2)}\right)\right|^2 dx + \int_{\op} \sigma_s \left|\nabla\left(\Phi_s^{(n_1)}-\Phi_s^{(n_2)}\right)\right|^2 dx \nonumber \\
					& \leq c \left\| v_{n_1} - v_{n_2} \right\|_{\infty,\Omega} \nonumber 
				\end{align}
				Integrating this inequality with respect to $t$ we then derive that $\{\psn\}$ is Cauchy in $L^2\left(0,T;W^{1,2}\left(\op\right)\right)$ and $\{\pen\}$ is Cauchy in $L^2\left(0,T;W^{1,2}\left(\Omega\right)\right)$. This concludes the proof of claim \ref{strongconv}. 
			\end{proof}
			Continuing the proof of lemma \ref{jcont}, we can conclude from  claim \ref{strongconv} and \eqref{ptmtsup} that, 
			\begin{align}
				\Phi_s^{(n)} &\rightarrow \Phi_s, \ \mbox{ strongly in $L^2\left(0,T;W^{1,2}\left(\op\right)\right)$, and strongly in $L^q\left(\op_T\right) \ \forall 1\leq q < \infty$, } \nonumber \\
				\Phi_s^{(n)} &\rightarrow \Phi_s, \ \mbox{ strongly in $L^2\left(0,T;W^{1,2}\left(\op\right)\right)$, and strongly in $L^q\left(\op_T\right) \ \forall 1\leq q < \infty$ } \nonumber 
			\end{align}
			As a consequence, we can conclude that, 
			\begin{equation}
				\ifae\left(v_n,\Phi_s^{(n)} -\Phi_e^{n}\right) \rightarrow \ifae\left(v,\Phi_s-\Phi_e\right), \ \mbox{ strongly in $L^q\left(\op_T\right) \ \forall 1 \leq q < \infty$ } \nonumber 
			\end{equation}
			Then, due to \eqref{ptmtlp} we can conclude, 
			\begin{align}
				\nabla\Phi_s^{(n)} &\rightarrow \nabla \Phi_s \ \mbox{ strongly in $L^q\left(\op_T\right) \ \forall 1\leq q < p$ } \nonumber \\
				\nabla \Phi_e^{(n)} &\rightarrow \nabla \Phi_e \ \mbox{ strongly in $L^q\left(\op_T\right) \ \forall 1\leq q < p$ } \nonumber 
			\end{align}
			Now, we are able to conclude that, 
			\begin{align}
				\qe&\left(v_n,\psn-\pen, \nabla\psn,\nabla\pen\right) \rightarrow \qe\left(v,\Phi_s-\Phi_e, \nabla\Phi_s, \nabla \Phi_e \right) \nonumber \\
				& \mbox{ strongly in $L^q\left(\ot\right)$ for all $1 \leq q < \frac{p}{2}$. } \nonumber 
			\end{align}
			At this point we are now in a position to pass to the limit as $n\rightarrow \infty$ in \eqref{mtfive}-\eqref{mteight} and in \eqref{unine}-\eqref{ueleven} to find that $u = \mathbb{J}\left(v\right)$. Then, from the uniqueness of solutions to \eqref{mtone}-\eqref{mtfour} and \eqref{uone}-\eqref{uthree}, we can conclude that every subsequence of $\{u_n\}$ has a further convergent subsequence, each of which converges to $\mathbb{J}\left(v\right)$. As a result, the whole sequence $\{u_n\}$ converges to $\mathbb{J}\left(v\right)$. The proof of the continuity of $\mathbb{J}$ is complete. 
		\end{proof}
		
		The final piece in order to use the Leray-Schauder Theorem is to demonstrate that for any $\delta \in \left(0,1\right)$ and $u\in C\left(\overline{\ot}\right)$ so that, 
		\begin{equation}
			u = \delta\mathbb{J}\left(u\right) \label{deltaJone} 
		\end{equation}
		there is a constant $c$ so that, 
		\begin{equation}
			\left\|u \right\|_{\infty,\ot} \leq c \label{finalLScond}
		\end{equation}
		Equation \eqref{deltaJone} is equivalent to the existence of a triple $\left(u,\Phi_s,\Phi_e\right)$ satisfying the equations, 
		\begin{align}
			\left(C_p\rho\right)\partial_t u -\mdiv\left(k\nabla u\right) &= \delta \qe\left(u,\Phi_s-\Phi_e,\nabla\Phi_s,\nabla\Phi_e\right), \ \mbox{ in $\ot$, } \nonumber   \\
			-\mdiv\left(\sigma_s\nabla\Phi_s\right) &= -\delta A_s\ifae\left(u,\Phi_s-\Phi_e\right), \ \mbox{ in $\op_T$ } \nonumber  \\
			\mdiv\left(\sigma_e\nabla\Phi_e\right) &= - \delta\mdiv\left(\sigma_e d_1\left(u_0-\te\left(u_0-u\right)\right){\mathbf{f}}\right) + \delta A_s \ifae\left(u,\Phi_s-\Phi_e\right), \ \mbox{ in $\ot$ } \nonumber  \\
			-k\nabla u \cdot \mathbf{n} &= \delta k_1\left(u_0-\te\left(u_0-u\right) -T_a\right), \ \mbox{ on $\Sigma_T$, } \nonumber  \\
			\sigma_s\nabla\Phi_s \cdot \mathbf{n} &= \delta I, \ \mbox{ on $\partial\op\times\left(0,T\right)$ } \nonumber  \\
			\sigma_e \nabla \Phi_e \cdot \mathbf{n} &=0, \ \mbox{ on $\Sigma_T$ } \nonumber 
		\end{align}
		Then from lemma \ref{potlinfbound} we can conclude that there is a constant so that, 
		\begin{equation}
			\sup_{\ot}\left|\Phi_e\right| + \sup_{\op_T}\left|\Phi_s\right| \leq cT. \nonumber 
		\end{equation}
		Consequently we can conclude that $\ifae\left(u,\Phi_s-\Phi_e\right)$ is bounded in $L^{\infty}\left(\ot\right)$. As a result, by lemma \ref{lpelliptic} we have, 
		\begin{equation}
			\int_{\ot} \left|\nabla \Phi_e\right|^p dxdt + \int_{\op_T} \left|\nabla\Phi_s\right|^p dx dt \leq cT \nonumber 
		\end{equation}
		Then we also have that $\qe\left(u, \Phi_s-\Phi_e, \nabla\Phi_s, \nabla\Phi_e\right)$ is bounded in $L^{\frac{p}{2}}\left(\ot\right)$. By lemma \ref{linfparabolic} we then have equation \eqref{finalLScond}. We can now use the Leray-Schauder Theorem to conclude that $\mathbb{J}$ has a fixed-point. This gives us a weak solution to \eqref{mainone}-\eqref{mainthree} in the sense of \ref{weaksolutiondefn}. 
		
		Since $u \in C\left(\overline{\ot}\right)$ there exists a number $T^*>0$ so that, 
		\begin{equation}
			\left|u_0(x)-u\left(x,t\right)\right| \leq \varepsilon \ \mbox{ for all $t\in\left[0,T^*\right]$. } \nonumber 
		\end{equation}
		As a result we have,
		\begin{equation}
			u_0-\te\left(u_0-u\right) = u_0-u_0+u = u \ \mbox{ on $\Omega_{T^*}$. } \nonumber 
		\end{equation}
		Thus equations \eqref{temp}-\eqref{electrolytepot} are satisfied in $\Omega_{T^*}$. The proof of our Main Theorem \ref{MainTheorem} is now complete.

			\end{document}